\documentclass[twoside,11pt,hidelinks]{amsart}

\usepackage[utf8]{inputenc}
\usepackage[english]{babel}
\usepackage[fixlanguage]{babelbib}
\usepackage{amssymb, amsfonts, amsmath, amsthm, amscd, mathtools, thmtools, tikz, xcolor, hyperref, a4wide}
\usepackage[numbers,square]{natbib}
\usepackage[all]{xy}
\usepackage{ xfrac, calc, indentfirst, cleveref, aligned-overset, easyReview, enumitem, float, tikz-cd, microtype, tensor, epsfig, stackengine, scalerel, wasysym, color, graphicx }

\newcounter{contador}
\numberwithin{contador}{section}

\newtheorem{theorem}[contador]{Theorem}
\newtheorem{prop}[contador]{Proposition}
\newtheorem{lemma}[contador]{Lemma}
\newtheorem{corollary}[contador]{Corollary}

\theoremstyle{definition}
\newtheorem{defi}[contador]{Definition}
\newtheorem{obs}[contador]{Remark}
\newtheorem{exe}[contador]{Example}

 \newcommand{\Not}[2][0]{	
	\setcounter{enumi}{#1}
	\renewcommand{\theenumi}{#2\arabic{enumi}}
	\renewcommand{\labelenumi}{(\theenumi)}
	\setlength{\itemindent}{\widthof{#2}}
	\setlength{\itemsep}{4pt}
}

\allowdisplaybreaks 

\title[A Generalization of the Ehresmann-Schein-Nambooripad Theorem]{A Generalization of the Ehresmann-Schein-Nambooripad Theorem to Two-Sided Ehresmann Semigroupoids}
\author[Haag and Tamusiunas]{Rafael Haag$^*$ and Thaísa Tamusiunas}
\address{Instituto de Matem\'{a}tica, Universidade Federal do Rio Grande do Sul,  Av. Bento Gon\c{c}alves, 9500, 91509-900. Porto Alegre-RS, Brazil}
\email{rafaelpetasny@gmail.com}
\email{thaisa.tamusiunas@gmail.com}
\thanks{$^*$ Corresponding author}
\date{} 

\begin{document}

    \subjclass[2020]{Primary 20M50. Secondary 20M75.} 
    \keywords{Ehresmann-Schein-Nambooripad Theorem, Ehresmann semigroupoids, Ehresmann semigroups, restriction semigroupoids, restriction semigroups, premorphisms}
    
    \begin{abstract}
    We introduce the notion of two-sided Ehresmann semigroupoids and show that they are in correspondence with a specific class of categories, which we call local biordered Ehresmann categories. This correspondence provides a unified generalization of the Ehresmann-Schein-Nambooripad Theorem for both inverse semigroupoids and Ehresmann semigroups. In particular, two-sided restriction semigroupoids form a distinguished subclass of two-sided Ehresmann semigroupoids, and for this case we describe the associated class of categories, extending earlier results for restriction semigroups.  
    \end{abstract}

    \maketitle


\section{Introduction}

        Pseudogroups arose in the study of symmetries of differential equations as a generalization of transformation groups. Such structures consist of collections of homeomorphisms between open subsets of a topological space satisfying certain conditions. In an effort to obtain an abstract framework that includes pseudogroups, two different solutions appeared: inverse semigroups, introduced independently by V. Wagner \cite{wagner1952semigroups} and G. Preston \cite{preston1954semigroups}, and inductive groupoids, introduced by C. Ehresmann \cite{ehresmann1960inductive}.

        The classical Ehresmann-Schein-Nambooripad Theorem (ESN Theorem for short) shows that Wagner and Preston's approach to pseudogroups is, in fact, equivalent to Ehresmann's. More precisely, this theorem provides a constructive algorithm for obtaining an inductive groupoid $\mathbf{G}(S)$ from an inverse semigroup $S$, and for obtaining an inverse semigroup $\mathbf{S}(G)$ from an inductive groupoid $G$. These constructions are mutually inverse, in the sense that $\mathbf{G}(\mathbf{S}(G)) = G$ as inductive semigroupoids and $\mathbf{S}(\mathbf{G}(S)) = S$ as inverse semigroups. Furthermore, it shows that a function $\varphi \colon S \to T$ between inverse semigroups is a semigroup homomorphism if and only if the corresponding function $\varphi \colon \mathbf{G}(S) \to \mathbf{G}(T)$ is an appropriate structure-preserving functor. This correspondence establishes an isomorphism between the category of inverse semigroups and the category of inductive groupoids. The ESN Theorem has been generalized to many classes of semigroups (see \cite{armstrong1988}, \cite{fitz2021}, \cite{gould2009restriction}, \cite{gould2011actions}, \cite{rsgpdexpansion}, \cite{hollings2010extending}, \cite{lawson1991}, \cite{nambooripad1974}, \cite{stokes2017d}, \cite{stokes2023}, \cite{wang2019}, \cite{wang2020}, \cite{wang2022}, \cite{wang2023d}, \cite{wang2014} and \cite{wang2016}) and to inverse semigroupoids (see \cite{dewolf2018ehresmann}). Among these, three generalizations of the ESN Theorem are particularly relevant to this work.

        In \cite[Theorem 3.17]{lawson1991}, Mark Lawson studies the class of semigroups $S$ endowed with two unary operations, denoted $\ast,+ \colon S \to S$, such that the partially defined operation
            $$ s \circ t = st, \ \text{ defined whenever } s^\ast = t^+, $$
        endows the set $S$ with a category structure. This structure is enriched with additional operations that allow one to recover the original semigroup multiplication. Such semigroups were called Ehresmann semigroups, and the corresponding categories were called Ehresmann categories. In the same work, Lawson extended the classical ESN Theorem to an isomorphism between the category of Ehresmann semigroups and the category of Ehresmann categories \cite[Theorem 4.24]{lawson1991}. As a particular case, Lawson proved that the category of two-sided restriction semigroups (also called \emph{idempotent connected Ehresmann semigroups}, or \emph{weakly $E$-ample semigroups}) is isomorphic to the category of inductive categories \cite[Theorem 5.7]{lawson1991}. 
        
        The latter result was generalized by Christopher Hollings to two further category isomorphisms, in which the objects remain the same but the morphisms are weakened in two dual ways, namely via $\vee$-premorphisms and via $\wedge$-premorphisms. This constitutes the second reference that is particularly relevant to our work. More precisely, Hollings proved that $\vee$-premorphisms between two-sided restriction semigroups correspond to ordered functors between inductive categories \cite[Theorem 4.1]{hollings2010extending}, and that $\wedge$-premorphisms between two-sided restriction semigroups correspond to inductive prefunctors between inductive categories \cite[Theorem 5.1]{hollings2010extending}.

        To emphasize the relevance of Hollings’ work, we note that many ESN-type theorems establish equivalences between categories whose morphisms are required to preserve the underlying structure strictly. Hollings showed that, in the context of two-sided restriction semigroups, one can go beyond this setting and obtain category isomorphisms involving more general notions of morphism. In addition to \cite{hollings2010extending}, isomorphisms of categories with $\vee$-premorphisms are also considered in the classical ESN Theorem for inverse semigroups \cite[Theorem 4.1.8]{lawson1998inverse}, its generalization to inverse semigroupoids \cite[Theorem 3.21]{dewolf2018ehresmann} and to locally restriction $P$-restriction semigroups \cite[Theorem 5.7]{wang2019}. On the other hand, isomorphisms of categories with $\wedge$-premorphisms are also considered in the ESN-type Theorems for one-sided restriction semigroups \cite[Theorem 6.6]{gould2011actions} and for one-sided restriction semigroupoids \cite[Theorem 6.5]{lrspgesn}. Both types of premorphisms were introduced by Donald McAlister and Norman Reilly in their study of $E$-unitary covers for inverse semigroups \cite{mcalister1977}. More recently, $\wedge$-premorphisms have become relevant due to their connection to partial actions.

        Lastly, Darien DeWolf and Dorette Pronk generalized the classical ESN Theorem to inverse semigroupoids \cite[Corollary 3.18]{dewolf2018ehresmann}. In this context, the groupoids corresponding to inverse semigroupoids were called locally inductive groupoids. They also characterized the property satisfied by locally inductive groupoids that allow the corresponding inverse semigroupoid to be endowed with a category structure \cite[Theorem 3.16]{dewolf2018ehresmann}. 
        
        Taking into account the results of Lawson, DeWolf and Pronk, and Hollings, the following questions naturally arise.
       
        \vspace{0,1cm}
        
        \emph{Since Ehresmann semigroups and inverse semigroupoids generalize inverse semigroups in different directions, is there a broader algebraic framework that unifies these constructions and supports an ESN-type correspondence? Furthermore, in which level of generality can we obtain isomorphisms of categories involving $\wedge$-premorphisms and $\vee$-premorphisms?}

\vspace{0,1cm}

        Motivated by these questions, the aim of this paper is to introduce a class of algebraic structures that encompasses both Ehresmann semigroups and inverse semigroupoids, which we call \emph{Ehresmann semigroupoids}. We then establish an ESN-type correspondence between these semigroupoids and a suitable class of categories, referred to as \emph{local biordered Ehresmann categories}. We further investigate how this correspondence behaves when the notion of structure-preserving morphism is extended to $\vee$-premorphisms and $\wedge$-premorphisms. We show that a correspondence at the level of $\vee$-premorphisms and $\wedge$-premorphisms is obtained within the class of two-sided restriction semigroupoids. For this reason, the final two subsections of the paper are devoted to restriction semigroupoids.

        This paper is structured as follows. In Section 2, we introduce Ehresmann semigroupoids, local biordered Ehresmann categories, and the machinery required to establish a bijective ESN-type correspondence between them. In Section 3, we restrict the ESN-type correspondence to the class of restriction semigroupoids and the class of inverse semigroupoids, and we describe the corresponding classes of categories associated with each type of semigroupoid. In particular, we investigate the conditions on the associated category under which the semigroupoid admits a category structure or a semigroup structure. Finally, in Section 4, we obtain three category isomorphisms: the category of Ehresmann semigroupoids and (2,1,1)-morphisms is isomorphic to the category of local biordered Ehresmann categories and inductive functors; the category of restriction semigroupoids and $\vee$-premorphisms is isomorphic to the category of inductive categories and ordered functors; and the category of restriction semigroupoids and $\wedge$-premorphisms is isomorphic to the category of inductive categories and inductive prefunctors.

\section{Relating Semigroupoids and Categories}

    In this section, we introduce the class of Ehresmann semigroupoids, together with the class of local biordered Ehresmann categories. To each Ehresmann semigroupoid $S$ we associate a local biordered Ehresmann category $C(S)$, and to each local biordered Ehresmann category $\mathcal{C}$ we associate an Ehresmann semigroupoid $S(\mathcal{C})$. The main result of this section is that $S(C(S)) = S$ as Ehresmann semigroupoids and that $C(S(\mathcal{C})) = \mathcal{C}$ as local biordered Ehresmann categories. This correspondence is the \textit{object part} of three category isomorphisms that we develop in Section 4.

\subsection{Ehresmann Semigroupoids} In this subsection, we introduce one-sided and two-sided Ehresmann semigroupoids, which generalize both restriction semigroupoids \cite{rsgpdexpansion} and Ehresmann semigroups \cite{lawson1991}. To each left (respectively, right) Ehresmann semigroupoid, we associate a partial order $\leq_l$ (respectively, $\leq_r$). We begin with the definition of a semigroupoid.

    \begin{defi} \label{def:semigroupoid} \cite[Definition 2.1]{exel2011semigroupoid}
        A \emph{semigroupoid} is a triple $(S,S^{(2)},\star)$ such that $S$ is a set, $S^{(2)}$ is a subset of $S \times S$, and $\star \colon S^{(2)} \to S$ is an operation which is associative in the following sense: if $r,s,t \in S$ are such that either
        \begin{enumerate} \Not{s}
            \item $(s,t) \in S^{(2)}$ and $(t,r) \in S^{(2)}$, or \label{s1}
            \item $(s,t) \in S^{(2)}$ and $(s \star t,r) \in S^{(2)}$, or \label{s2}
            \item $(t,r) \in S^{(2)}$ and $(s,t \star r) \in S^{(2)}$, \label{s3}
        \end{enumerate}
        then all of $(s,t)$, $(t,r)$, $(s \star t,r)$ and $(s,t \star r)$ lie in $S^{(2)}$, and $(s \star t) \star r = s \star (t \star r)$.
    \end{defi}

For simplicity, we denote a semigroupoid by $S$ rather than by the triple $(S, S^{(2)}, \star)$. The operation $\star$ will be referred to as the \emph{composition} and will be written by juxtaposition; that is, $st = s \star t$. We say that $st$ \emph{is defined} whenever $(s,t) \in S^{(2)}$.

    \begin{defi}
        A \emph{left Ehresmann semigroupoid} is a pair $(S,+)$, where $S$ is a semigroupoid and $+ \colon S \to S$ is a unary operation satisfying the following conditions:
        \begin{enumerate} \Not{le}
            \item for every $s \in S$, $s^+s$ is defined and $s^+s = s$; \label{le1}
            \item if $s^+t^+$ is defined, then $t^+s^+$ is defined and $s^+t^+ = t^+s^+$; \label{le2}
            \item if $s^+t^+$ is defined, then $s^+t^+ = (s^+t)^+$; \label{le3}
            \item if $st$ is defined, then $(st)^+ = (st^+)^+$. \label{le4}
        \end{enumerate}
        In this case, we denote by $S^+ = \{ s^+ \colon s \in S \}$ the \emph{set of projections} of $(S,+)$.
    \end{defi}

    \begin{obs} \label{obs:le}
        Axioms \eqref{le3} and \eqref{le4} are well defined. In fact, it follows from \eqref{le1} and the associativity of $S$ that $st = s(t^+t)$ is defined if and only if $st^+$ is defined. Consequently, if $s^+t^+$ is defined, then so is $s^+t$ in \eqref{le3}, and if $st$ is defined, then so is $st^+$ in \eqref{le4}.
    \end{obs}

    \begin{lemma} \label{lema:le}
        Let $(S,+)$ be a left Ehresmann semigroupoid. Then the following statements hold.
        \begin{itemize}
            \item[(a)] For every $e \in S^+$, $ee$ is defined, $ee=e$ and $e^+ = e$.
            \item[(b)] If $st$ is defined, then $(st)^+ = (st)^+s^+$.
        \end{itemize}

        \begin{proof}
            (a) If $e \in S^+$, then $e = s^+$ for some $s \in S$. Since $s^+s$ is defined by \eqref{le1}, it follows from Remark \ref{obs:le} that $ee = s^+s^+$ is defined. From \eqref{le3} and \eqref{le1}, we obtain $ee = s^+s^+ = (s^+s)^+ = s^+ = e$. On the other hand, we have
            \begin{align*}
                e = s^+ \overset{\eqref{le1}}{=} (s^+)^+ s^+ \overset{\eqref{le2}}{=} s^+ (s^+)^+ \overset{\eqref{le3}}{=} (s^+s^+)^+ = (s^+)^+ = e^+.
            \end{align*}

            (b) From \eqref{le1}, we have $st = s^+st$. In particular, $s^+(st)$ is defined. By Remark \ref{obs:le}, it follows that $s^+(st)^+$ is defined. Using \eqref{le3}, we obtain $s^+(st)^+ = (s^+st)^+ = (st)^+$. Finally, by \eqref{le2}, we have $(st)^+ = (st)^+s^+$.
        \end{proof}
    \end{lemma}

    \begin{lemma} \label{lema:le-order}
       Let $(S,+)$ be a left Ehresmann semigroupoid. For $s,t \in S$, the following conditions are equivalent.
    \begin{itemize}
        \item[(a)] The product $s^+ t$ is defined and satisfies $s^+ t = s$.
        \item[(b)] There exists $e \in S^+$ such that $e t$ is defined and $e t = s$.
    \end{itemize}
    Moreover, the relation $\leq_l$ on $S$, defined by
\[
s \leq_l t \quad \text{if and only if condition (a) holds},
\]
is a partial order on $S$.

        \begin{proof}
            If (a) holds, then (b) holds with $e = s^+$. Conversely, suppose that (b) holds. Since $e = e^+$ by Lemma \ref{lema:le}(a), we have
                $$ s^+ = (et)^+ = (e^+t)^+ \overset{\eqref{le3}}{=} e^+t^+ = et^+. $$
            Using \eqref{le1} together with the previous equality, we obtain $s = et = et^+t = s^+t$. This shows that conditions (a) and (b) are equivalent. 
            
            We now prove that $\leq_l$ is a partial order on $S$. From \eqref{le1}, we have $s \leq_l s$ for all $s \in S$, and hence $\leq_l$ is reflexive. If $s \leq_l t$ and $t \leq_l s$, then $s^+t = s$ and $t^+s = t$. Therefore,
                $$ s = s^+t = s^+t^+s \overset{\eqref{le2}}{=} t^+s^+s \overset{\eqref{le1}}{=} t^+s = t, $$
            which shows that $\leq_l$ is antisymmetric. Finally, if $s \leq_l t$ and $t \leq_l r$, then
                $$ s = s^+t = s^+t^+r \overset{\eqref{le3}}{=} (s^+t)^+ r. $$
            Since $(s^+t)^+ \in S^+$, condition (b) implies that $s^+r$ is defined and that $s^+r = s$. Therefore, $\leq_l$ is transitive. This proves that $\leq_l$ is a partial order on $S$.
        \end{proof}
    \end{lemma}

    In an analogous manner, we define a right Ehresmann semigroupoid as follows.

    \begin{defi}
        A \emph{right Ehresmann semigroupoid} is a pair $(S,\ast)$, where $S$ is a semigroupoid and $\ast \colon S \to S$ is a unary operation satisfying the following conditions:
        \begin{enumerate} \Not{re}
            \item for every $s \in S$, $ss^\ast$ is defined and $ss^\ast = s$; \label{re1}
            \item if $s^\ast t^\ast$ is defined, then $t^\ast s^\ast$ is defined and $s^\ast t^\ast = t^\ast s^\ast$; \label{re2}
            \item if $s^\ast t^\ast$ is defined, then $s^\ast t^\ast = (st^\ast)^\ast$; \label{re3}
            \item if $st$ is defined, then $(st)^\ast = (s^\ast t)^\ast$. \label{re4}
        \end{enumerate}
        In this case, we denote by $S^\ast = \{s^\ast \colon s \in S\}$ the \emph{set of projections} of $(S,\ast)$.
    \end{defi}

    Right Ehresmann semigroupoids are dual to left Ehresmann semigroupoids in the following sense. Let $S$ be a semigroupoid. Define a new operation on $S$ by $s \star^{op} t = t \star s$, whenever $t \star s$ is defined. Then $(S,\star,+)$ is a left Ehresmann semigroupoid if and only if $(S,\star^{op},+)$ is a right Ehresmann semigroupoid. As a consequence, axioms \eqref{re3} and \eqref{re4} are well defined, and the dual statements of Lemmas \ref{lema:le} and \ref{lema:le-order} hold. For later reference, we state these dual results below.

    \begin{lemma} \label{lema:re}
        Let $(S,\ast)$ be a right Ehresmann semigroupoid.  Then the following statements hold.
        \begin{itemize}
            \item[(a)] For every $e \in S^\ast$, $ee$ is defined, $ee=e$ and $e^\ast = e$.
            \item[(b)] If $st$ is defined, then $(st)^\ast = t^\ast (st)^\ast$.
        \end{itemize}
    \end{lemma}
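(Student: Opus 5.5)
The plan is to deduce Lemma \ref{lema:re} from Lemma \ref{lema:le} by the duality just described, and then, as a check, to verify the two statements directly from \eqref{re1}--\eqref{re4}.

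\textbf{Duality argument.} Let $(S,\ast)$ be a right Ehresmann semigroupoid and consider the opposite semigroupoid $(S,\star^{op})$. Associativity in the sense of Definition \ref{def:semigroupoid} is symmetric under reversing pairs: conditions \eqref{s1}--\eqref{s3} are permuted among themselves. Hence $(S,\star^{op})$ is again a semigroupoid. Under the reversal, \eqref{re1}--\eqref{re4} for $\ast$ in $\star$ become exactly \eqref{le1}--\eqref{le4} for $\ast$ in $\star^{op}$. For instance, \eqref{re3} reads $s^\ast \star^{op} t^\ast = (t \star^{op} s^\ast)^\ast$ after renaming $s \leftrightarrow t$ and using \eqref{re2}.

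So $(S,\star^{op},\ast)$ is a left Ehresmann semigroupoid. Applying Lemma \ref{lema:le}(a) gives $e\star^{op}e = e$ and $e^\ast = e$, and $e \star^{op} e = ee$. Applying Lemma \ref{lema:le}(b) to $t\star^{op}s = st$ gives $(st)^\ast = (st)^\ast \star^{op} t^\ast = t^\ast(st)^\ast$, which is (b).

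\textbf{Direct verification of (a).} For $e = s^\ast$, \eqref{re1} says $ss^\ast$ is defined. The dual of Remark \ref{obs:le} says $ts$ is defined iff $t^\ast s$ is defined, since $t = tt^\ast$. Hence $s^\ast s^\ast$ is defined. Then
$$s^\ast s^\ast = (ss^\ast)^\ast = s^\ast$$
by \eqref{re3} and \eqref{re1}. Further,
$$s^\ast = s^\ast (s^\ast)^\ast = (s^\ast)^\ast s^\ast = (s^\ast s^\ast)^\ast = (s^\ast)^\ast,$$
using \eqref{re1}, \eqref{re2} and \eqref{re3}. For the last step, \eqref{re3} is read with first factor $s^\ast$ on the outside.

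\textbf{Direct verification of (b).} By \eqref{re1}, $st = stt^\ast$, so $(st)t^\ast$ is defined. By the dual remark, $(st)^\ast t^\ast$ is then defined, and \eqref{re3} gives $(st)^\ast t^\ast = (stt^\ast)^\ast = (st)^\ast$. Finally, \eqref{re2} swaps the factors to give $t^\ast(st)^\ast$.

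\textbf{Main obstacle.} The only delicate point is bookkeeping in \eqref{re3}, namely which variable sits inside the $\ast$. I would check it by applying \eqref{re3} to the pair $(st)^\ast, t^\ast$: this yields $((st)(t^\ast))^\ast$, which is exactly what is needed.
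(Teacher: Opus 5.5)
Your proposal is correct and takes the same approach as the paper, which gives no separate proof and obtains this lemma from the $\star^{op}$ duality with Lemma~\ref{lema:le}; your direct verification is just the dual of the paper's proof of that lemma. One cosmetic slip: under the reversal (after swapping $s \leftrightarrow t$), \eqref{re3} reads $s^\ast \star^{op} t^\ast = (s^\ast \star^{op} t)^\ast$ rather than $(t \star^{op} s^\ast)^\ast$, and this translation does not need \eqref{re2}.
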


    \begin{lemma} \label{lema:re-order}
       Let $(S,\ast)$ be a right Ehresmann semigroupoid. For $s,t \in S$, the following conditions are equivalent.
    \begin{itemize}
        \item[(a)] The product $t s^\ast$ is defined and satisfies $t s^\ast = s$.
        \item[(b)] There exists $e \in S^\ast$ such that $t e$ is defined and $t e = s$.
    \end{itemize}
    Moreover, the relation $\leq_r$ on $S$, defined by
\[
s \leq_r t \quad \text{if and only if condition (a) holds},
\]
is a partial order on $S$.
    \end{lemma}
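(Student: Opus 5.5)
The cleanest route is duality. As observed just before Lemma \ref{lema:re}, $(S,\star,\ast)$ is a right Ehresmann semigroupoid if and only if $(S,\star^{op},\ast)$ is a left Ehresmann semigroupoid. We first check that $(S,\star^{op})$ is again a semigroupoid: conditions \eqref{s1}--\eqref{s3} for $\star^{op}$ are those for $\star$ with the roles of $s$ and $r$ exchanged. Under this translation, axioms \eqref{re1}--\eqref{re4} become \eqref{le1}--\eqref{le4} for $\ast$ in place of $+$.

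We then apply Lemma \ref{lema:le-order} to the left Ehresmann semigroupoid $(S,\star^{op},\ast)$. Its condition (a) says that $s^\ast \star^{op} t$ is defined and equals $s$, which means that $t\star s^\ast$ is defined and $t s^\ast = s$. Its condition (b) says that there is $e\in S^\ast$ with $e\star^{op} t = te$ defined and equal to $s$. These are exactly conditions (a) and (b) of the present statement, so they are equivalent. The relation $\leq_l$ for $\star^{op}$ is exactly $\leq_r$ for $\star$, and hence $\leq_r$ is a partial order.

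Alternatively, one can mirror the proof of Lemma \ref{lema:le-order} directly. For (b) $\Rightarrow$ (a), use $e=e^\ast$ from Lemma \ref{lema:re}(a) to get
\[
s^\ast=(te)^\ast=(te^\ast)^\ast\overset{\eqref{re3}}{=}t^\ast e^\ast = t^\ast e,
\]
and then
\[
s = te = tt^\ast e = ts^\ast
\]
by \eqref{re1}. The order properties follow the same pattern:
\begin{itemize}
\item Reflexivity is \eqref{re1}.
\item For antisymmetry, $s=ts^\ast=st^\ast s^\ast=ss^\ast t^\ast=st^\ast=t$, using \eqref{re2} and \eqref{re1}.
\item For transitivity, $s=ts^\ast=rt^\ast s^\ast=r(ts^\ast)^\ast$ by \eqref{re3}, and then (b) gives $s\leq_r r$.
\end{itemize}

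The only delicate point is definedness bookkeeping. This is the dual of Remark \ref{obs:le}: $st$ is defined iff $s^\ast t$ is defined, using $s=ss^\ast$ and associativity. With that, each product written above is defined. The duality argument avoids this bookkeeping entirely, so it is the approach to present.
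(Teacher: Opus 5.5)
Your proposal is correct and matches the paper, which gives no separate proof of this lemma but obtains it exactly as you do: it is the dual of Lemma \ref{lema:le-order} under the opposite operation $\star^{op}$, and your translation of \eqref{re1}--\eqref{re4} into \eqref{le1}--\eqref{le4} and of conditions (a), (b) and $\leq_l$ into their right-sided counterparts is accurate. Your alternative direct mirror of the left-sided proof is also sound, including the definedness checks via $s = ss^\ast$ and associativity.
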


    We now introduce two-sided Ehresmann semigroupoids, starting with a lemma.

    \begin{lemma}
      Let $S$ be a semigroupoid, and let $+,\ast \colon S \to S$ be unary operations on $S$. Suppose that $(S,+)$ is a left Ehresmann semigroupoid and that $(S,\ast)$ is a right Ehresmann semigroupoid. Then the following conditions are equivalent.
\begin{itemize}
\item[(a)] The sets of projections $S^+$ and $S^\ast$ coincide.
\item[(b)] For all $s \in S$, we have $s^+ = (s^+)^\ast$ and $s^\ast = (s^\ast)^+$.
\end{itemize}

        \begin{proof}
            Suppose that $S^+ = S^\ast$. Then, by Lemma~\ref{lema:le}(a) and its dual, we obtain $s^+ = (s^+)^\ast$ and $s^\ast = (s^\ast)^+$ for every $s \in S$. Conversely, suppose that $s^+ = (s^+)^\ast$ and $s^\ast = (s^\ast)^+$ for all $s \in S$. Then, for every $e \in S^+$, there exists $s \in S$ such that $e = s^+ = (s^+)^\ast \in S^\ast$ and hence $S^+ \subseteq S^\ast$. By a symmetric argument, we also obtain $S^\ast \subseteq S^+$. Therefore, $S^+ = S^\ast$.
        \end{proof}
    \end{lemma}

    \begin{defi} \label{defi:two-sided-Ehresmann-semigroupoid}
        A \emph{two-sided Ehresmann semigroupoid} is a triple $(S,+,\ast)$, where $(S,+)$ is a left Ehresmann semigroupoid, $(S,\ast)$ is a right Ehresmann semigroupoid, and, for every $s \in S$, the following identities hold:
        \begin{align*}
            s^+ = (s^+)^\ast \quad\text{and}\quad s^\ast = (s^\ast)^+. \tag{E} \label{lre}
        \end{align*}
        In this case, we denote by $U = S^+ = S^\ast$ the set of projections of $(S,+,\ast)$.
    \end{defi}

   In this paper, we develop a generalization of the ESN Theorem for the class of two-sided Ehresmann semigroupoids and some of its subclasses. A one-sided version of the ESN Theorem has already been obtained in the special case of left restriction semigroupoids \cite{lrspgesn}. Therefore, from this point on, the term “Ehresmann semigroupoid” will always refer to a two-sided Ehresmann semigroupoid.

    \begin{lemma} \label{lema:lre-order}
        Let $(S,+,\ast)$ be an Ehresmann semigroupoid. Then the partial orders $\leq_l$ and $\leq_r$ coincide on the set of projections $U$.

        \begin{proof}
            Let $e,f \in U$. Then $e = e^+ = e^\ast$ by Lemmas \ref{lema:le}(b) and \ref{lema:re}(b). Moreover, by \eqref{le2}, $ef$ is defined if and only if $fe$ is defined, and in this case $ef = fe$. Therefore, $e \leq_l f$ if and only if $ef = fe$ is defined and $ef = e = fe$, which holds if and only if $e \leq_r f$
        \end{proof}
    \end{lemma}

    To conclude this subsection, we present two classes of Ehresmann semigroupoids that will be useful later, namely, semigroups of relations on a set and local meet-semilattices. 
    
    \subsubsection{Semigroup of relations} The semigroup $\mathcal{B}(X)$ of relations on a set $X$ was introduced in \cite{zaretskii1962}. It was later observed that $\mathcal{B}(X)$ admits a natural structure of an Ehresmann semigroup (see, for instance, \cite{stokes2003}).

    \begin{exe} \label{exe:relX}
        Let $X$ be a set. A relation on $X$ is a subset $R \subseteq X \times X$, and we denote by $\mathcal{B}(X)$ the set of all relations on $X$. Then $\mathcal{B}(X)$ carries a natural Ehresmann semigroup structure. In fact, given $R,S \in \mathcal{B}(X)$, let
            $$ R \circ S = \{ (x,y) \in X \times X \colon (x,z) \in R \text{ and } (z,y) \in S, \text{ for some } z \in X \}. $$
        Then $R \circ S \in \mathcal{B}(X)$, and the operation $\circ \colon \mathcal{B}(X) \times \mathcal{B}(X) \to \mathcal{B}(X)$ is associative. For each $R \in \mathcal{B}(X)$, we define
            $$ R^+ = \{ (x,x) \in X \times X \colon (x,y) \in R \} \quad\text{and}\quad R^\ast = \{ (y,y) \in X \times X \colon (x,y) \in R \}. $$
        We show that $(\mathcal{B}(X),\circ,+)$ is a left Ehresmann semigroup. The proof that $(\mathcal{B}(X),\circ,\ast)$ is a right Ehresmann semigroup is analogous. Moreover,
            $$ \mathcal{B}(X)^+ = \mathcal{B}(X)^\ast = \{ \{ (x,x) \in Y \} \colon Y \subseteq X \}. $$
        Let $R,S \in \mathcal{B}(X)$. Then
        \begin{align*}
            (x,y) \in R^+ \circ R &\iff \exists z \in X \colon (x,z) \in R^+, (z,y) \in R \\
            &\iff (x,x) \in R^+, (x,y) \in R \\
            &\iff (x,y) \in R.
        \end{align*}
        Hence $R^+ \circ R = R$, proving \eqref{lr1}. To verify \eqref{lr2}, observe that
        \begin{align*}
            (x,y) \in R^+ \circ S^+ &\iff \exists z \in X \colon (x,z) \in R^+, (z,y) \in S^+ \\
            &\iff x=y, (x,x) \in R^+, (x,x) \in S^+.
        \end{align*}
        Thus $R^+ \circ S^+ = R^+ \cap S^+$. Analogously, we also have $S^+ \circ R^+ = S^+ \cap R^+ = R^+ \circ S^+$. To prove \eqref{lr3}, it remains to show that $(R^+ \circ S)^+ = R^+ \cap S^+$. Indeed,
        \begin{align*}
            (x,x) \in (R^+ \circ S)^+ &\iff \exists y \in X \colon (x,y) \in R^+ \circ S \\
            &\iff \exists y,z \in X \colon (x,z) \in R^+, (z,y) \in S \\
            &\iff \exists y \in X \colon (x,x) \in R^+, (x,y) \in S \\
            &\iff (x,x) \in R^+, (x,x) \in S^+ \\
            &\iff (x,x) \in R^+ \cap S^+.
        \end{align*}
        Finally, for \eqref{lr4}, we compute
        \begin{align*}
            (x,x) \in (R \circ S)^+ &\iff \exists y \in X \colon (x,y) \in R \circ S \\
            &\iff \exists y,z \in X \colon (x,z) \in R, (z,y) \in S \\
            &\iff \exists z \in X \colon (x,z) \in R, (z,z) \in S^+ \\
            &\iff \exists z \in X \colon (x,z) \in R \circ S^+ \\
            &\iff (x,x) \in (R \circ S^+)^+.
        \end{align*}
        This proves that $(\mathcal{B}(X),\circ,+)$ is a left Ehresmann semigroup.
    \end{exe}

\subsubsection{Local meet-semilattice}  A \textit{meet-semilattice} is a partially ordered set $(X, \leq)$ such that every pair of elements $x,y \in X$ has a \textit{greatest lower bound} in $X$, denoted by $x \wedge y$. In this case, the assignment $(x,y) \mapsto x \wedge y$ is called the \textit{meet} operation of $(X,\leq)$. In what follows, we introduce a generalization of meet-semilattices as a special class of semigroupoids and then characterize these semigroupoids in terms of partially ordered sets.

    \begin{defi} \label{def:localmeet}
        A \emph{local meet-semilattice} is a semigroupoid $S$ such that, for every $e \in S$, the product $ee$ is defined and satisfies $ee = e$, and whenever $ef$ is defined, the product $fe$ is also defined and $ef = fe$. That is, the operation in $S$ is commutative whenever it is defined.
    \end{defi}

    \begin{exe} \label{exe:localmeet}
        Every local meet-semilattice $S$ can be regarded as an Ehresmann semigroupoid with additional structure defined by $s^+ = s$ and $s^\ast = s$, for all $s \in S$. The verification of axioms \eqref{le1}, \eqref{le2}, \eqref{le3}, and \eqref{le4} is straightforward. Since a local meet-semilattice is commutative, the dual axioms are also satisfied. In this case, we have $S^+ = S = S^\ast$.
    \end{exe}

    \begin{prop} \label{prop:localmeet}
        There is a bijective correspondence between:
        \begin{itemize}
            \item[(a)] meet-semilattices (in the usual sense) and local meet-semilattices that are semigroups;
            \item[(b)] local meet-semilattices and disjoint unions of meet-semilattices.
        \end{itemize}

        \begin{proof}
            (a) If $(X,\leq,\wedge)$ is a meet-semilattice, then $\wedge \colon X \times X \to X$ is an associative and commutative operation such that $x \wedge x = x$, for all $x \in X$. Hence, $(X,\wedge)$ is a semigroup and a local meet-semilattice.
            
            Conversely, let $S$ be a local meet-semilattice that is also a semigroup. From Example \ref{exe:localmeet}, we have $U = S$. Hence, it follows from Lemma \ref{lema:lre-order} that the partial orders $\leq_l$ and $\leq_r$ coincide over $S$. Let $\leq = \leq_l = \leq_r$. Then $s \leq t$ if and only if $st = s$. Now, it is easy to see that $st$ is the greatest lower bound for $s$ and $t$. Thus, $(S,\leq)$ is a meet-semilattice with meet operation given by $s \wedge t = st$. This concludes that meet-semilattices (in the usual sense) are precisely the local meet-semilattices (as in Definition \ref{def:localmeet}) that are also semigroups.\\

            \noindent(b) Let $(X_i,\leq_i,\wedge_i){i \in I}$ be a family of pairwise disjoint meet-semilattices. By (a), each $(X_i,\wedge_i)$ is a commutative semigroup such that $e \wedge_i e = e$ for all $e \in X_i$. Let $X = \bigcup_{i \in I} X_i$, and define a partial operation on $X$ by declaring that $xy$ is defined if and only if $x,y \in X_i$ for some $i \in I$, in which case $xy = x \wedge_i y$. Then $X$ is a commutative semigroupoid such that $ee = e$ for all $e \in X$. Hence, $X$ is a local meet-semilattice.

            Conversely, let $S$ be a local meet-semilattice and define a relation $\omega$ on $S$ by declaring that $s \omega t$ if and only if $st$ is defined. The relation $\omega$ is reflexive, since for every $e \in S$ the product $ee$ is defined; symmetric, since the operation in $S$ is commutative; and transitive, since
            \begin{align*}
                st, tr \text{ are defined} &\iff st, (st)r \text{ are defined} \\
                &\iff ts, (ts)r \text{ are defined} \iff ts, sr \text{ are defined}.
            \end{align*}
            Hence, $\omega$ is an equivalence relation, and $S$ is the disjoint union of its equivalence classes. For each $e \in S$, the equivalence class $\omega(e) = \{ f \in S \colon ef \text{ is defined} \}$ of $\omega$ is a meet-semilattice. Indeed, if $f,g \in \omega(e)$, then $fg$ is defined and $e(fg)$ is also defined, so that $fg \in \omega(e)$. Since $S$ is a local meet-semilattice and the operation restricted to each equivalence class $\omega(e)$ is totally defined, it follows from (a) that $\omega(e)$ is a meet-semilattice. This shows that every local meet-semilattice is a disjoint union of meet-semilattices..
        \end{proof}
    \end{prop}

In many situations, it is convenient to view local meet-semilattices as semigroupoids. On the other hand, Proposition \ref{prop:localmeet} shows that the operation on a local meet-semilattice is completely determined by its partial order. In this case, we may denote the operation of a local meet-semilattice by $\wedge$.

\subsection{Local Ehresmann Categories} In this subsection, we fix our conventions for categories, recall the notion of an ordered category, and introduce the class of biordered local Ehresmann categories, which generalize both Ehresmann categories \cite{lawson1991} and locally inductive groupoids \cite{dewolf2018ehresmann}.

    \begin{defi}
        A \emph{small category} is a quintuple $(\mathcal{C},\mathcal{C}_0,D,R,\circ)$, where $\mathcal{C}_0 \subseteq \mathcal{C}$ are sets, $D,R \colon \mathcal{C} \to \mathcal{C}_0$ are functions, and $\circ \colon \mathcal{C}^{(2)} \to \mathcal{C}$ is a partially defined binary operation, where
            $$ \mathcal{C}^{(2)} = \{ (x,y) \in \mathcal{C} \times \mathcal{C} \colon D(x) = R(y) \}, $$
        satisfying the following conditions:
        \begin{enumerate} \Not{C}
            \item if $D(x) = R(y)$, then $D(x \circ y) = D(y)$ and $R(x \circ y) = R(x)$; \label{C1}
            \item if $D(x) = R(y)$ and $D(y) = R(z)$, then $x \circ (y \circ z) = (x \circ y) \circ z$; \label{C2}
            \item for every $e \in \mathcal{C}_0$, we have $D(e) = e = R(e)$, and whenever $D(x) = e = R(y)$, it holds that $x \circ e = x$ and $e \circ y = y$. \label{C3}
        \end{enumerate}
    \end{defi}

    In the present work, we only consider small categories. Thus, we omit the term \emph{small} and simply write \emph{category}. Every category $(\mathcal{C},\mathcal{C}_0,D,R,\circ)$ can be viewed as a semigroupoid $(\mathcal{C},\mathcal{C}^{(2)},\circ)$, with $\mathcal{C}^{(2)}$ defined as above. Accordingly, we shall simply denote a category by $\mathcal{C}$ and say that “$x \circ y$ is defined” instead of writing $(x,y) \in \mathcal{C}^{(2)}$ or $D(x) = R(y)$. Moreover, every category can be regarded as an Ehresmann semigroupoid $(\mathcal{C},+,\ast)$ by defining $x^+ = R(x)$ and $x^\ast = D(x)$ for every $x \in \mathcal{C}$. In this case, the partial orders $\leq_l$ and $\leq_r$ on $\mathcal{C}$ are trivial; that is, $x \leq_l y$ if and only if $x = y$, which is equivalent to $x \leq_r y$.

    \begin{defi}
        An \emph{ordered category} is a pair $(\mathcal{C},\leq)$, where $\mathcal{C}$ is a category and $\leq$ is a partial order on $\mathcal{C}$ satisfying the following conditions:
        \begin{enumerate} \Not{O}
            \item if $x \leq y$, then $D(x) \leq D(y)$ and $R(x) \leq R(y)$; \label{O1}
            \item if $x \leq y$, $x' \leq y'$ and $xx'$ and $yy'$ are defined, then $xx' \leq yy'$. \label{O2}
        \end{enumerate}
        We say that an ordered category has \emph{restrictions} if it satisfies the following condition:
\begin{align*}
e \in \mathcal{C}_0,\ e \leq D(x) \Longrightarrow \exists! \, x|e \in \mathcal{C}
\text{ such that } x|e \leq x \text{ and } D(x|e) = e. \tag{Or}\label{restriction}
\end{align*}

Analogously, an ordered category is said to have \emph{corestrictions} if it satisfies:
\begin{align*}
e \in \mathcal{C}_0,\ e \leq R(x) \Longrightarrow \exists! \, e|x \in \mathcal{C}
\text{ such that } e|x \leq x \text{ and } R(e|x) = e. \tag{Oc}\label{corestriction}
\end{align*}

When they exist, the elements $x|e$ and $e|x$ are called the \emph{restriction} and the \emph{corestriction} of $x$ to $e$, respectively.
    \end{defi}

    \begin{lemma} \label{lema:oc-order}
        Let $(\mathcal{C},\leq)$ be an ordered category.
        \begin{itemize}
            \item[(a)] If $(\mathcal{C},\leq)$ have restrictions, then $x \leq y$ if and only if $D(x) \leq D(y)$ and $y|D(x) = x$.
            \item[(b)] If $(\mathcal{C},\leq)$ have corestrictions, then $x \leq y$ if and only if $R(x) \leq R(y)$ and $D(x)|y = x$.
        \end{itemize}

        \begin{proof}
            We prove (a). The proof of (b) is dual. Suppose that $x \leq y$. Then $D(x) \leq D(y)$ by \eqref{O1}. On the other hand, since $D(x) = D(x)$ and $x \leq y$, it follows from the uniqueness of the restriction that $y|D(x) = x$. Conversely, suppose that $D(x) \leq D(y)$ and $y|D(x) = x$. Then $x \leq y$ by the definition of restriction.
        \end{proof}
    \end{lemma}

    In the following definition, we consider $\leq_l \circ \leq_r$ and $\leq_r \circ \leq_l$ as the composition of relations in the Ehresmann semigroup $\mathcal{B}(\mathcal{C})$, as defined in Example \ref{exe:relX}.

    \begin{defi}
        A \emph{ local biordered Ehresmann category} is a triple $(\mathcal{C},\leq_l,\leq_r)$ satisfying the following conditions:
        \begin{enumerate} \Not{ec}
            \item $(\mathcal{C},\leq_l)$ is an ordered category with corestrictions; \label{ec1}
            \item $(\mathcal{C},\leq_r)$ is an ordered category with restrictions; \label{ec2}
            \item \label{ec3} $\mathcal{C}_0$ is a local meet-semilattice with partial order $\leq$ and operation $\wedge$;
            \item the restriction of the partial orders $\leq_l$ and $\leq_r$ to $\mathcal{C}_0$ coincide with $\leq$; \label{ec4}
            \item the relations $\leq_l \circ \leq_r$ and $\leq_r \circ \leq_l$ coincide on $\mathcal{C}$; \label{ec5}
            \item if $x \leq_l y$ and $D(x) \wedge e$ is defined, then $x|(D(x) \wedge e) \leq_l y|(D(y) \wedge e)$; \label{ec6}
            \item if $x \leq_r y$ and $e \wedge R(x)$ is defined, then $(e \wedge R(x))|x \leq_r (e \wedge R(y))|y$. \label{ec7}
        \end{enumerate}
    \end{defi}

    \begin{obs}
        Axioms \eqref{ec6} and \eqref{ec7} are well defined. Indeed, suppose that $x \leq_l y$. Since $(\mathcal{C},\leq_l)$ is an ordered category by \eqref{ec1}, it follows from \eqref{O1} that $D(x) \leq_l D(y)$. By \eqref{ec4}, this implies that $D(x) \leq D(y)$ in $\mathcal{C}_0$. By \eqref{ec3}, $\mathcal{C}_0$ is a local meet-semilattice with partial order $\leq$, and therefore $D(x) = D(y) \wedge D(x)$. Now, if $e \wedge D(x)$ is defined, then $e \wedge (D(y) \wedge D(x))$ is also defined. By associativity of the operation $\wedge$, it follows that $e \wedge D(y)$ is defined as well. Since $(\mathcal{C},\leq_r)$ has restrictions by \eqref{ec2}, and since $e \wedge D(x) \leq D(x)$ and $e \wedge D(y) \leq D(y)$ by \eqref{ec3}, together with the fact that $\leq = \leq_r$ on $\mathcal{C}_0$ by \eqref{ec4}, we conclude that the restrictions $x|(D(x) \wedge e)$ and $y|(D(y) \wedge e)$ exist. An analogous argument shows that, if $e \wedge R(x)$ is defined, then the corestrictions $(e \wedge R(x))|x$ and $(e \wedge R(y))|y$ are also defined.
    \end{obs}
    
    Throughout the remainder of this subsection, $\mathcal{C}$ will denote a  local biordered Ehresmann category $(\mathcal{C},\leq_l,\leq_r)$.
    
    \begin{obs} \label{obs:lawson-categories}
        Ehresmann categories were introduced in \cite{lawson1991}. The class of Ehresmann categories coincides precisely with the class of local biordered Ehresmann categories such that $\mathcal{C}_0$ is a meet-semilattice. In this case, the conditions “$D(x) \wedge e$ is defined” in \eqref{ec6} and “$e \wedge R(x)$ is defined” in \eqref{ec7} can be replaced simply by the condition $e \in \mathcal{C}_0$, thereby recovering Lawson’s original definition. When necessary, we refer to Lawson’s Ehresmann categories as \emph{biordered Ehresmann categories}, in order to distinguish them from Ehresmann semigroupoids that are also categories.
    \end{obs}
    
    Let $x,y \in \mathcal{C}$ be such that $e = D(x) \wedge R(y)$ is defined. Then $e \in \mathcal{C}_0$, $e \leq D(x)$ and $e \leq R(y)$ by \eqref{ec3}. From \eqref{ec1} and \eqref{ec2}, the restriction $x|e$ and the corestriction $e|y$ exist, and in this case we have $D(x|e) = e = R(x|e)$. Thus, the composition $(x|e) \circ (e|y)$ is defined. We introduce a new partial operation on $\mathcal{C}$, given by
        $$ x \otimes y = (x|e) \circ (e|y), \text{ whenever } e = D(x) \wedge R(y) \text{ is defined}. $$
    The partial operation $\otimes$ is called the \emph{pseudo-product} of $(\mathcal{C},\leq_l,\leq_r)$.

    \begin{lemma} \label{lema:pseudo-produto-1}
        The following conditions are equivalent.
        \begin{itemize}
            \item[\eqref{s1}] $x \otimes y$ and $y \otimes z$ are defined.
            \item[\eqref{s2}] $x \otimes y$ and $(x \otimes y) \otimes z$ are defined.
            \item[\eqref{s3}] $y \otimes z$ and $x \otimes (y \otimes z)$ are defined.
        \end{itemize}

        \begin{proof}
            Suppose that $x \otimes y$ is defined and note that $D(x \otimes y) = D(e|y)$. On the other hand, by \eqref{corestriction} we have $e|y \leq y$, and hence $D(e|y) \leq D(y)$ by \eqref{O1}. Since $\mathcal{C}_0$ is a local meet-semilattice, it follows that
                $$ D(x \otimes y) = D(e|y) = D(e|y) \wedge D(y). $$
            Using the associativity of $\wedge$ together with the equality above, we conclude that $D(x \otimes y) \wedge R(z)$ is defined if and only if $D(y) \wedge R(z)$ is defined. That is, conditions \eqref{s1} and \eqref{s2} are equivalent. Analogously, since $R(y \otimes z) = R(y|e) \leq R(y)$, we obtain that conditions \eqref{s1} and \eqref{s3} are equivalent.
        \end{proof}
    \end{lemma}

  We aim to show that $\mathcal{C}$ can be endowed with a semigroupoid structure, with composition given by $\otimes$. Verifying associativity, that is, $(x \otimes y) \otimes z = x \otimes (y \otimes z)$ whenever $x \otimes y$ and $y \otimes z$ are defined, turns out to be technically involved. Rather than carrying out this verification directly, we rely on a sequence of lemmas from \cite{lawson1991}. Their proofs adapt to the present setting with only minor modifications, using Lemma \ref{lema:pseudo-produto-1} to ensure that all expressions are well defined. Accordingly, we omit the details.

    The following lemma combines \cite[Lemmas 4, 10, and 17]{lawson1991}.

    \begin{lemma} \label{lema:restricao}
        Let $x \in \mathcal{C}$ and $e,f \in \mathcal{C}_0$.  Then the following statements hold.
        \begin{itemize}[align=parleft, labelsep=18px]
            \item[(a)] If $f \leq e \leq D(x)$, then $(x|e)|f = x|f$ and $x|f \leq_r x|e \leq_r x$.
            \item[(a')] If $f \leq e \leq R(x)$, then $f|(e|x) = f|x$ and $f|x \leq_l e|x \leq_l x$.
            \item[(b)] If $e \leq D(x)$, then $x|e = x \otimes e$.
            \item[(b')] If $e \leq R(x)$, then $e|x = e \otimes x$.
            \item[(c)] $e \otimes f$ is defined if and only if $e \wedge f$ is defined, and in this case $e \otimes f = e \wedge f = f \otimes e$.
        \end{itemize}
    \end{lemma}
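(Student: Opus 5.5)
We prove (a), (b) and (c) directly from the axioms \eqref{ec1}--\eqref{ec4}, the uniqueness clauses in \eqref{restriction} and \eqref{corestriction}, and the fact that $\mathcal{C}_0$ is a local meet-semilattice. The primed statements (a') and (b') are dual: interchange $D$ and $R$, restrictions and corestrictions, and $\leq_r$ and $\leq_l$.

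\emph{Part (a).} Assume $f \leq e \leq D(x)$. By \eqref{ec4} these inequalities also hold for $\leq_r$.
\begin{itemize}
\item By \eqref{ec2}, the restriction $x|e$ exists, with $x|e \leq_r x$ and $D(x|e) = e$.
\item Since $f \leq e = D(x|e)$, the restriction $(x|e)|f$ exists, with $(x|e)|f \leq_r x|e$ and $D((x|e)|f) = f$.
\item By transitivity, $(x|e)|f \leq_r x$. Its domain is $f \leq D(x)$.
\item By uniqueness in \eqref{restriction}, this forces $(x|e)|f = x|f$.
\end{itemize}
The chain $x|f \leq_r x|e \leq_r x$ then follows from what was just shown.

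\emph{Part (b).} Assume $e \leq D(x)$.
\begin{itemize}
\item Since $R(e) = e$ and $D(x) \wedge e = e$ (because $e \leq D(x)$ in the local meet-semilattice $\mathcal{C}_0$), the product $x \otimes e$ is defined.
\item By definition, $x \otimes e = (x|e)\circ(e|e)$.
\item It remains to show $e|e = e$. The element $e$ satisfies $e \leq_l e$ and $R(e) = e$, so uniqueness in \eqref{corestriction} gives $e|e = e$.
\item By \eqref{C3}, $(x|e)\circ e = x|e$, since $D(x|e) = e$.
\end{itemize}

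\emph{Part (c).} For $e, f \in \mathcal{C}_0$, we have $D(e) \wedge R(f) = e \wedge f$. So $e \otimes f$ is defined exactly when $e \wedge f$ is. Write $g = e \wedge f$. Then
\[
e \otimes f = (e|g)\circ(g|f).
\]
We claim $e|g = g$; the same argument gives $g|f = g$.
\begin{itemize}
\item By \eqref{ec4}, $g \leq_r e$, and $D(g) = g$.
\item Uniqueness in \eqref{restriction} then gives $e|g = g$.
\end{itemize}
Hence $e \otimes f = g \circ g = g$ by \eqref{C3}. By commutativity of $\wedge$, also $f \otimes e = f \wedge e = e \wedge f$.

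The main point requiring care is the repeated use of \eqref{ec4}. It transfers the order $\leq$ on $\mathcal{C}_0$ to $\leq_r$ or $\leq_l$, so that elements of $\mathcal{C}_0$ can be recognized as the unique restrictions or corestrictions. Once this is in place, every step reduces to a uniqueness argument.
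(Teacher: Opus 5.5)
Your proof is correct and follows the route the paper intends: the paper omits the argument and defers to Lawson's Lemmas 4, 10 and 17, which rest on the same uniqueness-of-restriction/corestriction reasoning you use. Your check that $D(x)\wedge e$ is defined (and equals $e$) whenever $e \leq D(x)$ in the local meet-semilattice $\mathcal{C}_0$ is precisely the ``minor modification'' the paper says is needed in the local setting.
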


    The following lemma combines \cite[Lemmas 12, 9, 11 and 19]{lawson1991}.

    \begin{lemma} \label{lema:pseudo-produto-2}
        Let $x,y,z \in \mathcal{C}$ and $e \in \mathcal{C}_0$.  Then the following statements hold.
        \begin{itemize}
            \item[(a)] If $x \otimes y$ is defined, then $R(x \otimes y) = R(x \otimes R(y))$ and $D(x \otimes y) = D(D(x) \otimes y)$.
            \item[(b)] If $(x \otimes y) \otimes z$ is defined, then $(x \otimes y) \otimes z = (x|f) \otimes ((f|y) \otimes z)$, where $f = D(x) \wedge R(y)$.
            \item[(c)] If $(x \otimes e) \otimes z$ is defined, then $(x \otimes e) \otimes z = x \otimes (e \otimes z)$.
            \item[(d)] If $(e \otimes y) \otimes z$ is defined, then $(e \otimes y) \otimes z = e \otimes (y \otimes z)$.
        \end{itemize}
    \end{lemma}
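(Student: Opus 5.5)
The plan is to reduce every identity to two facts about restrictions and corestrictions of composites, both derived from uniqueness in \eqref{restriction} and \eqref{corestriction} together with \eqref{O2}. Writing $b|g$ with $g\le D(b)$ and $k|c$ with $k\le R(c)$, these are:
\begin{itemize}
\item[(i)] if $a\circ b$ is defined and $g\le D(b)$, then $(a\circ b)|g=(a|R(b|g))\circ(b|g)$;
\item[(ii)] if $c\circ d$ is defined and $k\le R(c)$, then $k|(c\circ d)=(k|c)\circ(d|D(k|c))$.
\end{itemize}
For (i), note that $R(b|g)\le_r R(b)=D(a)$ by \eqref{O1} and \eqref{ec4}, so $a|R(b|g)$ exists. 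The composite $(a|R(b|g))\circ(b|g)$ is then $\le_r a\circ b$ by \eqref{O2} and has domain $g$, so uniqueness gives (i); (ii) is dual. We also use two trivial consequences of uniqueness: if $f\le e$ in $\mathcal{C}_0$, then $e|f=f=f|e$; and $x|D(x)=x$, $R(x)|x=x$.

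\emph{Part (a).} Put $e=D(x)\wedge R(y)$. Then $x\otimes R(y)=(x|e)\circ(e|R(y))=(x|e)\circ e=x|e$, so $R(x\otimes y)=R(x|e)=R(x\otimes R(y))$. Symmetrically, $D(x)\otimes y=(D(x)|e)\circ(e|y)=e|y$, so $D(x\otimes y)=D(e|y)=D(D(x)\otimes y)$.

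\emph{Part (b).} With $f=D(x)\wedge R(y)$ we have $D(x\otimes y)=D(f|y)$. Put $g=D(f|y)\wedge R(z)$; the same $g$ governs both $(x\otimes y)\otimes z$ and $(f|y)\otimes z$. By (i),
$$(x\otimes y)\otimes z=\bigl((x|f)\circ(f|y)\bigr)|g\circ(g|z)=(x|f)|k\circ(f|y)|g\circ(g|z),$$
where $k=R((f|y)|g)$. On the other side, $(f|y)\otimes z=(f|y)|g\circ(g|z)$ has range $k\le f=D(x|f)$. Hence $(x|f)\otimes((f|y)\otimes z)=(x|f)|k\circ k|\bigl((f|y)|g\circ(g|z)\bigr)$. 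The corestriction of a composite to its own range is itself, so this equals the same triple composite.

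\emph{Parts (c) and (d).} For (c), apply (b) with $y=e$. Then $f=D(x)\wedge e$ and $f|e=f$, so the left side is $(x|f)\otimes(f\otimes z)$. By Lemma \ref{lema:restricao}(b'),(c), $f\otimes z=n|z$ with $n=f\wedge R(z)$. Then, by Lemma \ref{lema:restricao}(a), $(x|f)\otimes(n|z)=(x|f)|n\circ n|z=x|n\circ n|z$. Computing $x\otimes(e\otimes z)$ the same way, with $e\otimes z=m|z$ and $m=e\wedge R(z)$, gives $x|n\circ n|(m|z)=x|n\circ n|z$ by Lemma \ref{lema:restricao}(a'). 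The two sides agree.

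For (d), apply (b) with $x=e$ and $f=e\wedge R(y)$; then $e|f=f$, so the left side is $f\otimes((f|y)\otimes z)$. Using (i) and (ii), this becomes a corestriction of $y\otimes z$ to $f\wedge R(y\otimes z)$. By (a), $R(y\otimes z)\le R(y)$, so this meet equals $e\wedge R(y\otimes z)$. Lemma \ref{lema:restricao}(a') then identifies the result with $e\otimes(y\otimes z)$.

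The main obstacle is (d), which is not a formal dual of (c) because the pseudo-product is not symmetric. Expect to push the corestriction through the composite $(f|y)|g\circ(g|z)$ carefully via (ii), and to check that every meet used is defined. For the latter, Lemma \ref{lema:pseudo-produto-1} and the associativity of $\wedge$ in $\mathcal{C}_0$ guarantee definedness throughout.
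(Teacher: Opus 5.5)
Your arguments for (a), (b) and (c) are correct. They use only the ordered-category structure of $\le_l$ and of $\le_r$ taken separately, that is, \eqref{ec1}--\eqref{ec4}. There is one slip in (ii): the second factor must be the corestriction $D(k|c)|d$, which exists because $D(k|c)\le D(c)=R(d)$. As written, the restriction $d|D(k|c)$ in general neither exists nor composes with $k|c$.

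The genuine gap is in (d). After your reduction, the left side is $(f|y)\otimes z=((f|y)|g)\circ(g|z)$, where $f=e\wedge R(y)$ and $g=D(f|y)\wedge R(z)$. The right side is $q|(y\otimes z)$, where $q=e\wedge R(y\otimes z)$, $y\otimes z=(y|h)\circ(h|z)$ and $h=D(y)\wedge R(z)$. To identify the two you must compare $(f|y)|g$, a restriction of a corestriction, with $y|h$ in $\le_l$, and you must compute $R((f|y)|g)$. This is an interchange law between restrictions and corestrictions.

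Facts (i), (ii) and Lemma~\ref{lema:restricao} cannot supply it, because (d) is false under \eqref{ec1}--\eqref{ec4} alone. Here is a counterexample.
\begin{itemize}
\item Take objects $a_0<a$ and $b_0<b$, forming two components of $\mathcal{C}_0$.
\item Take non-identity arrows $y,u,v,\alpha,\beta$ with $(D,R)$ equal to $(a,b)$, $(a,b_0)$, $(a_0,b)$, $(a_0,b_0)$, $(a_0,b_0)$ respectively. There are then no non-trivial composites.
\item Let $\le_r$ be generated by $v\le_r y$ and $\alpha\le_r u$, and let $\le_l$ be generated by $u\le_l y$ and $\beta\le_l v$; both also contain $a_0<a$ and $b_0<b$.
\end{itemize}
One checks \eqref{O1}, \eqref{O2}, and unique (co)restrictions directly, so everything you invoke holds here. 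Yet $(b_0\otimes y)\otimes a_0=u\otimes a_0=\alpha$, while $b_0\otimes(y\otimes a_0)=b_0\otimes v=\beta$.

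So any proof of (d) must use the interchange axioms; the paper itself just defers to Lawson here. Concretely, the argument goes as follows.
\begin{itemize}
\item Apply \eqref{ec6} to $f|y\le_l y$ and $R(z)$ to get $(f|y)|g\le_l y|h$.
\item Since $g\le h$, you also have $g|z\le_l h|z$. With \eqref{O2} for $\le_l$, the left side is $\le_l y\otimes z$.
\item Hence the left side equals $k|(y\otimes z)$, where $k=R((f|y)|g)\le f\wedge R(y\otimes z)=q$.
\item For the reverse inequality, apply \eqref{ec7} to $y|h\le_r y$ and $f$ to get $q|(y|h)\le_r f|y$.
\item The domain $d$ of $q|(y|h)$ satisfies $d\le D(f|y)$ and $d\le h\le R(z)$, so $d\le g$. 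Uniqueness and Lemma~\ref{lema:restricao}(a) then give $q|(y|h)=((f|y)|g)|d$, hence $q\le k$.
\end{itemize}
Thus the left side is $q|(y\otimes z)=e\otimes(y\otimes z)$.
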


    We can now conclude that the pseudo-product is associative.

    \begin{lemma} \label{lema:pseudo-produto-3}
      The set $\mathcal{C}$, endowed with the pseudo-product $\otimes$, is a semigroupoid.

        \begin{proof}
            Due to Lemma \ref{lema:pseudo-produto-1}, it remains to prove that, if $x \otimes y$ and $y \otimes z$ are defined, then $(x \otimes y) \otimes z = x \otimes (y \otimes z)$. Let $e = D(x) \wedge R(y)$. Then
            \begin{align*}
                [x \otimes y] \otimes z &= (x|e) \otimes [(e|y) \otimes z] & \ref{lema:pseudo-produto-2}(b) \\
                &= (x \otimes R(y)) \otimes [(D(x) \otimes y) \otimes z] & \ref{lema:restricao}(b,b') \\
                &= (x \otimes R(y)) \otimes [D(x) \otimes (y \otimes z)] & \ref{lema:pseudo-produto-2}(d) \\
                &= [(x \otimes R(y)) \otimes D(x)] \otimes (y \otimes z) & \ref{lema:pseudo-produto-2}(c) \\
                &= [x \otimes (R(y) \otimes D(x)] \otimes (y \otimes z) & \ref{lema:pseudo-produto-2}(c) \\
                &= [x \otimes (D(x) \otimes R(y))] \otimes (y \otimes z) & \ref{lema:restricao}(c) \\
                &= [(x \otimes D(x)) \otimes R(y)] \otimes (y \otimes z) & \ref{lema:pseudo-produto-2}(d) \\
                &= (x \otimes D(x)) \otimes [R(y) \otimes (y \otimes z)] & \ref{lema:pseudo-produto-2}(d) \\
                &= (x \otimes D(x)) \otimes [(R(y) \otimes y) \otimes z] & \ref{lema:pseudo-produto-2}(d) \\
                &= (x|D(x)) \otimes [(R(y)|y) \otimes z] & \ref{lema:restricao}(b,b') \\
                &= x \otimes [y \otimes z],
            \end{align*}
            where the last equality follows from the uniqueness of restrictions and corestrictions.
        \end{proof}
    \end{lemma}

\subsection{The Correspondence} In this subsection, we describe the construction of the  local biordered Ehresmann category $C(S)$ associated with an Ehresmann semigroupoid $(S,+,\ast)$ and the construction of the Ehresmann semigroupoid $S(\mathcal{C})$ associated with a  local biordered Ehresmann category $(\mathcal{C},\leq_l,\leq_r)$. We then show that these constructions are mutually inverse, in the sense that $S(C(S)) = (S,+,\ast)$ and $C(S(\mathcal{C})) = (\mathcal{C},\leq_l,\leq_r)$.\\

    For each Ehresmann semigroupoid $(S,+,\ast)$, we associate the following structure.
    \begin{itemize}
        \item The sets $C(S) = S$, $C(S)_0 = U$, and $C(S)^{(2)} = \{(s,t) \colon s^\ast = t^+\}$.
        \item Two functions $D,R \colon C(S) \to C(S)_0$, defined by $D(s) = s^\ast$ and $R(s) = s^+$.
        \item A partially defined binary operation on $C(S)$ given by $s \cdot t = st$ whenever $D(s) = R(t)$.
        \item Two partial orders $\leq_l$ and $\leq_r$ on $C(S)$, defined by
            $$ [s \leq_l t \iff s^+t \text{ is defined and } s^+t = s] \quad\text{and}\quad [s \leq_r t \iff ts^\ast \text{ is defined and } ts^\ast = s]. $$
    \end{itemize}
    By a slight abuse of notation, we denote the resulting data $(C(S),C(S)_0,D,R,\cdot,\leq_l,\leq_r)$ simply by $C(S)$.

    \begin{lemma}
        $C(S)$ is a  local biordered Ehresmann category.

        \begin{proof}
            First, we prove that $(C(S),C(S)_0,D,R,\cdot)$ is a category. Notice that $s \cdot t$ is well defined. In fact, let $s,t \in C(S)$ be such that $s^\ast = D(s) = R(t) = t^+$. Then the composition $ss^\ast = st^+$ is defined by \eqref{re1}, and thus $s \cdot t = st$ is defined by Remark \ref{obs:le}. Now we verify conditions \eqref{C1}, \eqref{C2} and \eqref{C3}.

            Suppose that $s \cdot t$ is defined. That is, $s^\ast = t^+$. Then
            \begin{align*}
                D(s \cdot t) = (st)^\ast \overset{\eqref{re4}}{=} (s^\ast t)^\ast = (t^+t)^\ast \overset{\eqref{le1}}{=} t^\ast = D(t).
            \end{align*}
            Analogously, from \eqref{le4} and \eqref{re1}, we obtain that $R(s \cdot t) = R(s)$. This proves \eqref{C1}. Condition \eqref{C2} follows from the associativity of $S$. Indeed, if $s,t,r \in C(S)$ are such that $D(s) = R(t)$ and $D(t) = R(r)$, then
                $$ s \cdot (t \cdot r) = s(tr) = (st)r = (s \cdot t) \cdot r. $$
            Now, let $e \in C(S)_0 = U$. By Lemma \ref{lema:le}(a) and its dual, we have $D(e)=e^\ast=e$ and $R(e)=e^+=e$. Moreover, if $s^\ast=D(s)=e$, then $s \cdot e = ss^\ast = s$ by \eqref{re1}. Similarly, if $t^+ = R(t) = e$, then $e \cdot t = t^+t = t$ by \eqref{le1}. Hence, condition \eqref{C3} holds, and $C(S)$ is a category. We now proceed to show that $C(S)$ is a  local biordered Ehresmann category.\\

            \noindent\eqref{ec1} Let $s,t \in C(S)$ be such that $s \leq_l t$. That is, $s^+t$ is defined and $s = s^+t$. Then
                $$ R(s) = s^+ = (s^+t)^+ \overset{\eqref{le3}}{=} s^+t^+ \overset{\ref{lema:le-order}(b)}{=} (s^+)^+ t^+ = R(s)^+ R(t), $$
            and
                $$ D(s) = s^\ast = (s^+t)^\ast \overset{\eqref{re1}}{=} ((s^+t)t^\ast)^\ast \overset{\eqref{re3}}{=} (s^+t)^\ast t^\ast = s^\ast t^\ast \overset{\eqref{lre}}{=} (s^\ast)^+ t^\ast = D(s)^+ D(t).  $$
            Therefore, $R(s) \leq_l R(t)$ and $D(s) \leq_l D(t)$. This proves \eqref{O1}. For \eqref{O2}, suppose that $s \leq_l t$, $u \leq_l v$ and that $s \cdot u$ and $t \cdot v$ are defined. Then $(su)^+(tv)$ is defined. In fact, from Lemma \ref{lema:le}(b) we obtain that $(su)^+ = (su)^+s^+$, and since $s \leq_l t$, the composition $s^+t$ is defined. From the associativity of $S$, we conclude that $(su)^+(tv) = (su)^+(s^+t)v$ is defined, and in this case
            \begin{align*}
                (su)^+(tv) &= (su)^+(s^+t)v & \ref{lema:le}(b) \\
                &= (su)^+ sv & (s \leq_l t) \\
                &= (su)^+ (ss^\ast) v & \eqref{re1} \\
                &= (su)^+ s(u^+ v) & (\exists s \cdot u) \\
                &= (su)^+ su & (u \leq_l v) \\
                &= su. & \eqref{le1}
            \end{align*}
            That is, $su \leq_l tv$. Now, we prove that $(\mathcal{C},\leq_l)$ has corestrictions. Let $e \in S(C)_0$ be such that $e \leq_l R(s)$. We claim that $e|s = es$. In fact, from $e \leq_l R(s)$, we obtain $e = es^+$. Therefore, by Remark \ref{obs:le}, $es$ is defined. Since $e = e^+$, it follows from \eqref{le3} that
                $$ (es)^+ = e^+s^+ = es^+ = e. $$
            That is, $R(es) = e$. Multiplying the previous equality by $s$ on the right side, we obtain $(es)^+s = es$. Thus, $es \leq_l s$. Suppose that $t \in C(S)$ is another element satisfying $s^+ = R(t) = e$ and $t \leq_l s$. Then $t = t^+s = es$. This shows that $e|s = es$ is the unique element satisfying $R(e|s) = e$ and $e|s \leq_l s$.\\

            \noindent\eqref{ec2} Is dual to \eqref{ec1}. The restriction $s|e$ is given by $se$, whenever $e \leq D(s)$.\\

            \noindent\eqref{ec3} We claim that $C(S)_0=U$ is a local meet-semilattice, with operation given by $e \wedge f = ef$, whenever $ef$ is defined in $S$. In fact, if $e,f \in U$ and $ef$ is defined, then $ef = e^+f^+ = (e^+f)^+ \in U$ by \eqref{le3}. Hence, the composition in $S$ restricts to a partially defined operation $(U \times U) \cap S^{(2)} \to U$. Moreover, by \eqref{le2}, whenever $ef$ is defined, so is $fe$, and in this case $ef=fe$. By Lemma \ref{lema:le}(a), for every $e \in U$ the product $ee$ is defined and satisfies $ee=e$. Therefore, $C(S)_0$ is a local meet-semilattice.\\

            \noindent\eqref{ec4} This is Lemma \ref{lema:lre-order}.  \\

            \noindent\eqref{ec5} Define an auxiliary relation on $C(S)$ by $s \leq_e t$ if and only if there are $u,v \in U$ such that $ut$ and $tv$ are defined and $utv = s$. We claim that $\leq_l \circ \leq_r = \leq_e = \leq_r \circ \leq_l$. In fact, suppose that there is $r \in C(S)$ such that $s \leq_l r \leq_r t$. Then
                $$ s = s^+r = s^+(tr^\ast), $$
            where $s^+,r^\ast \in U$. Therefore, $s \leq_e t$. Conversely, suppose that $s \leq_e t$ and let $u,v \in U$ be such that $s = utv$. Let $r = tv$. Since $u,v \in U$ and $s = utv = ur$, it follows from Lemma \ref{lema:le-order} and its dual that $r \leq_r t$ and $s \leq_l r$. This shows that $\leq_l \circ \leq_r = \leq_e$. The proof of $\leq_r \circ \leq_r = \leq_e$ is analogous.\\
            
            \noindent\eqref{ec6} Suppose that $s \leq_l t$ and that $D(s) \wedge e$ is defined. That is, $s = s^+t$ and $s^\ast e$ are defined. Since $s^\ast e \leq_l s^\ast = D(s)$, it follows from \eqref{ec2} that the restriction $s|(D(s) \wedge e)$ exists and is given by $s|(D(s) \wedge e) = ss^\ast e = se$. On the other hand, since $s = s^+t$, we obtain from Lemma \ref{lema:re}(b) and \eqref{re2} that $s^\ast = (s^+t)^\ast = (s^+t)^\ast t^\ast$. Now, it follows from $s^\ast e = (s^+t)^\ast t^\ast e$ that $t^\ast e = D(t) \wedge e$ is defined. Thus, the restriction $t|(D(t) \wedge e)$ exists and is given by $t|(D(t) \wedge e) = tt^\ast e = te$. Therefore
                $$ s|(D(s) \wedge e) = se = (s^+t)e = s^+(te) \leq_l te = t|(D(t) \wedge e). $$
            
            \noindent\eqref{ec7} Is dual to \eqref{ec6}. This proves that $(C(S),\leq_l,\leq_r)$ is a  local biordered Ehresmann category.
        \end{proof}
    \end{lemma}

    For each  local biordered Ehresmann category $(\mathcal{C},\leq_l,\leq_r)$, we associate the following structure.
    \begin{itemize}
        \item The sets $S(\mathcal{C}) = \mathcal{C}$ and $S(\mathcal{C})^{(2)} = \{ (x,y) \in \mathcal{C} \times \mathcal{C} \colon D(x) \wedge R(y) \text{ is defined} \}$.
        \item The pseudo-product $x \otimes y = (x|e) \circ (e,y)$, defined for every $(x,y) \in S(\mathcal{C})^{(2)}$.
        \item Two functions $+,\ast \colon S(\mathcal{C}) \to S(\mathcal{C})$, defined by $x^+ = R(x)$ and $x^\ast = D(x)$.
    \end{itemize}
    By a slight abuse of notation, we denote the resulting data $(S(\mathcal{C}), S(\mathcal{C})^{(2)},\otimes,+,\ast)$ simply by $S(\mathcal{C})$.

    \begin{lemma}
        $S(\mathcal{C})$ is an Ehresmann semigroupoid.

        \begin{proof}
            From Lemmas \ref{lema:pseudo-produto-1} and \ref{lema:pseudo-produto-3}, it follows that $(S(\mathcal{C}), S(\mathcal{C})^{2}, \otimes)$ is a semigroupoid. Moreover, conditions \eqref{le2} and \eqref{re2} correspond to Lemma \ref{lema:restricao}(c), conditions \eqref{le4} and \eqref{re4} follow from Lemma \ref{lema:pseudo-produto-2}(a), and condition \eqref{lre} is precisely \eqref{C3}. To complete the proof, it remains to verify conditions \eqref{le1} and \eqref{le3}; the verification of \eqref{re1} and \eqref{re3} is dual.\\

            \noindent\eqref{le1} Let $x \in S(\mathcal{C})$. By \eqref{C3}, we have $D(s^+) = D(R(s)) = R(s)$, and hence $s^{+} \otimes s$ is defined. Moreover, by Lemma \ref{lema:restricao}(b'), we obtain $s^+ \otimes s = R(s)|s$. But, from \eqref{ec1}, $s$ must be the unique element satisfying $R(s) = R(s)$ and $s \leq_l s$. Therefore, $s^+ \otimes s = R(s)|s = s$.\\

            \noindent\eqref{le3} Suppose that $s^+ \otimes t^+$ is defined. Since condition \eqref{le1} is satisfied and the pseudo-product $\otimes$ is associative, it follows from Remark \ref{obs:le} that $s^+ \otimes t$ is defined. Furthermore,
                $$ (s^+ \otimes t)^+ \overset{\ref{lema:restricao}(b')}{=} R( (s^+ \wedge t^+)|t ) \overset{\eqref{ec1}}{=} s^+ \wedge t^+ \overset{\ref{lema:restricao}(c)}{=} s^+ \otimes t^+. $$
            This proves that $(S(\mathcal{C}),+,\ast)$ is an Ehresmann semigroupoid.
        \end{proof}
    \end{lemma}

    The following proposition provides a bijective correspondence between the class of Ehresmann semigroupoids and the class of local Ehresmann categories.

    \begin{theorem}\label{prop:ESN-objetos}
        With the above notation, we have:
        \begin{itemize}
            \item[(a)] $S(C(S)) = (S,+,\ast)$ as Ehresmann semigroupoids;
            \item[(b)] $C(S(\mathcal{C})) = (\mathcal{C},\leq_l,\leq_r)$ as local Ehresmann categories.
        \end{itemize}

        \begin{proof}
            (a) As sets we have $S(C(S)) = C(S)_1 = S$. Combining Remark \ref{obs:le} and its dual, we obtain that $st$ is defined if and only if $s^\ast t^+$ is defined. Therefore,
            \begin{align*}
                s \otimes t \text{ is defined} \iff D(s) \wedge R(t) = s^\ast t^+ \text{ is defined} \iff st \text{ is defined}.
            \end{align*}
            That is, $S(C(S))^{(2)} = S^{(2)}$. Suppose that $s \otimes t$ is defined and let $e = D(s) \wedge R(t) \in U$. Then
            \begin{align*}
                s \otimes t = (s|e) \circ (e|t) = seet \overset{\ref{lema:le}(a)}{=} set = ss^\ast t^+t \overset{\eqref{le1},\eqref{re1}}{=} st.
            \end{align*}
            Therefore, $S(C(S)) = S$ as semigroupoid. The left Ehresmann structure of $S(C(S))$ is given by $s^+ = R(s)$, while the range function of $C(S)$ is given by $R(s) = s^+$. Hence, $S(C(S))$ and $S$ have the same left Ehresmann structure. Analogously, we conclude that $S(C(S))$ and $S$ have the same right Ehresmann structure. Thus, $S(C(S)) = (S,+,\ast)$ as Ehresmann semigroupoids.\\

            \noindent(b) As sets we have $C(S(\mathcal{C})) = \mathcal{C}$ and $C(S(\mathcal{C}))_0 = U = \mathcal{C}_0$. The range function of $C(S(\mathcal{C}))$ is given by $R(x) = x^+$, while the left Ehresmann structure of $S(\mathcal{C})$ is given by $x^+ = R(x)$. Therefore, $C(S(\mathcal{C}))$ and $\mathcal{C}$ have the same range function $R$. Analogously, we conclude that $C(S(\mathcal{C}))$ and $\mathcal{C}$ have the same domain function $D$. Suppose that $D(x) = R(y)$. Then
            \begin{align*}
                x \cdot y = x \otimes y = (x|(D(x) \wedge R(y))) \circ ((D(x) \wedge R(y))|y) = (x|D(x)) \circ (R(y)|y) = x \circ y.
            \end{align*}
            Therefore, $C(S(\mathcal{C})) = \mathcal{C}$ as categories. Denote by $\prec_l$ and $\prec_r$ the partial orders of $C(S(\mathcal{C}))$ and by $\leq_l$ and $\leq_r$ the partial orders of $\mathcal{C}$. From Lemma \ref{lema:oc-order}, we obtain
                $$ x \prec_l y \iff x^+ \prec_l y^+ \text{ and } x^+|y = x, $$
            and
                $$ x \leq_l y \iff x^+ \leq_l y^+ \text{ and } x^+|y = x. $$
            From \eqref{ec4}, we obtain that the restriction of $\prec_l$ and of $\leq_l$ to $\mathcal{C}_0$ coincide with the partial order $\leq$ of $\mathcal{C}_0$. Hence, $x^+ \prec_l y^+$ if and only if $x^+ \leq_l y^+$. The corestriction on $C(S(\mathcal{C}))$ is given by $x^+|y = x^+ \otimes y$. On the other hand, it follows from Lemma \ref{lema:restricao}(b') that the corestriction on $\mathcal{C}$ satisfies $x^+|y = x^+ \otimes y$. Since both pseudo-products are given in $S(\mathcal{C})$, we conclude that $x \prec_l y$ if and only if $x \leq_l y$. Analogously, we have $x \prec_r y$ if and only if $x \leq_r y$. Thus, $C(S(\mathcal{C})) = (\mathcal{C},\leq_l,\leq_r)$ as local Ehresmann categories.
        \end{proof}
    \end{theorem}

\section{Particular Cases of the Correspondence} 
In this section, we apply the correspondence between Ehresmann semigroupoids and local Ehresmann categories established in Theorem \ref{prop:ESN-objetos} to derive several important special cases. We show that Theorem \ref{prop:ESN-objetos} extends the object part of the ESN Theorem for Ehresmann semigroups \cite[Theorem 4.24]{lawson1991}, and, consequently, for restriction semigroups \cite[Theorem 3.9]{hollings2010extending} and inverse semigroups \cite[Theorem 4.1.8]{lawson1998inverse}.  We also recover, as a parallel case, the correspondence for inverse semigroupoids \cite[Theorem 3.16]{dewolf2018ehresmann}. Moreover, we establish two new correspondences: two-sided restriction semigroupoids correspond to locally inductive categories, and Ehresmann categories correspond to complete local biordered Ehresmann categories. 

\subsection{Restriction Semigroupoids} Restriction semigroupoids were introduced in \cite{rsgpdexpansion}. In this subsection, we introduce the notion of locally inductive categories, which generalizes the inductive categories defined in \cite{hollings2010extending}. We then prove that $(S,+,\ast)$ is a restriction semigroupoid if and only if $C(S)$ is a locally inductive category.

    \begin{defi} \cite[Definition 2.5]{rsgpdexpansion}
        A \emph{left restriction semigroupoid} is a pair $(S,+)$, where $S$ is a semigroupoid and $+ \colon S \to S$ is a unary operation, satisfying the following properties:
        \begin{enumerate} \Not{lr}
            \item for all $s \in S$, $s^+s$ is defined and $s^+s = s$; \label{lr1}
            \item if $s^+t^+$ is defined, then $t^+s^+$ is defined and $s^+t^+ = t^+s^+$; \label{lr2}
            \item if $s^+t^+$ is defined, then $s^+t^+ = (s^+t)^+$; \label{lr3}
            \item if $st$ is defined, then $st^+ = (st)^+s$. \label{lr4}
        \end{enumerate}
        A \emph{right restriction semigroupoid} is a pair $(S,\ast)$, where $S$ is a semigroupoid and $\ast \colon S \to S$ is a unary operation, satisfying the following properties:
        \begin{enumerate} \Not{rr}
            \item for all $s \in S$, $ss^\ast$ is defined and $ss^\ast = s$; \label{rr1}
            \item if $s^\ast t^\ast$ is defined, then $t^\ast s^\ast$ is defined and $s^\ast t^\ast = t^\ast s^\ast$; \label{rr2}
            \item if $s^\ast t^\ast$ is defined, then $s^\ast t^\ast = (st^\ast)^\ast$; \label{rr3}
            \item if $st$ is defined, then $s^\ast t = t(st)^\ast$. \label{rr4}
        \end{enumerate}
        A \emph{two-sided restriction semigroupoid} is a triple $(S,+,\ast)$, where $(S,+)$ is a left restriction semigroupoid, $(S,\ast)$ is a right restriction semigroupoid and, for every $s \in S$, the following identities are satisfied:
        \begin{align*}
            s^+ = (s^+)^\ast \quad\text{and}\quad s^\ast = (s^\ast)^+. \tag{E} \label{lrr}
        \end{align*}
    \end{defi}

Throughout this paper, we work with two-sided restriction semigroupoids. For simplicity, we omit the term \emph{two-sided} and write \emph{restriction semigroupoid}.

    \begin{lemma}
        Every restriction semigroupoid is an Ehresmann semigroupoid.

        \begin{proof}
            We prove that left restriction semigroupoid are left Ehresmann semigroupoid. The proof for the right side is dual, and the two-sided version is a consequence of the latter.

            Let $(S,+)$ be a left restriction semigroupoid. We only need to prove that, if $st$ is defined, then $(st)^+ = (st^+)^+$. In fact, we have
            \begin{align*}
                (st)^+ \overset{\eqref{lr1}}{=} (s^+(st))^+ \overset{\eqref{lr3}}{=} s^+(st)^+ \overset{\eqref{lr2}}{=} (st)^+ s^+ \overset{\eqref{lr3}}{=} ((st)^+ s)^+ \overset{\eqref{lr4}}{=} (st^+)^+.
            \end{align*}
        \end{proof}
    \end{lemma}

    \begin{lemma} \label{lema:restriction-order}
        Let $(S,+,\ast)$ be a restriction semigroupoid. Then $\leq_l = \leq_r$.

        \begin{proof}
            Suppose that $s \leq_l t$. That is, $s^+t$ is defined and $s^+t = s$. Then
            \begin{align*}
                s = s^+ t \overset{\eqref{lrr}}{=} (s^+)^\ast t \overset{\eqref{rr4}}{=} t(s^+t)^\ast = ts^\ast.
            \end{align*}
            Therefore $s \leq_r t$. Analogously, if $s \leq_r t$, then $s \leq_l t$ by \eqref{lrr} and \eqref{lr4}.
        \end{proof}
    \end{lemma}

    \begin{defi}
        Let $\mathcal{C}$ be a category and $\leq$ be a partial order on $\mathcal{C}_1$. Then $(\mathcal{C},\leq)$ is called a \emph{locally inductive category} if it satisfies the following conditions:
        \begin{enumerate} \Not{ic}
            \item $(\mathcal{C},\leq)$ is an ordered category with restrictions and corestrictions; \label{ic1}
            \item $\mathcal{C}_0$ is a local meet-semilattice with operation $\wedge$ and partial order $\leq$. \label{ic2}
        \end{enumerate}
    \end{defi}

    \begin{prop} \label{prop:restriction}
        $(\mathcal{C},\leq)$ is a locally inductive category if and only if $(\mathcal{C},\leq,\leq)$ is a  local biordered Ehresmann category.

        \begin{proof}
            Let $(\mathcal{C},\leq,\leq)$ be a  local biordered Ehresmann category. Then \eqref{ic1} follows from conditions \eqref{ec1} and \eqref{ec2}, and \eqref{ic2} is precisely condition \eqref{ec3}. Therefore $(\mathcal{C},\leq)$ is a locally inductive category.
        
            Conversely, let $(\mathcal{C},\leq)$ be a locally inductive category. Then conditions \eqref{ec1} and \eqref{ec2} follows from \eqref{ic1}, condition \eqref{ec3} is \eqref{ic2}, and conditions \eqref{ec4} and \eqref{ec5} are trivially satisfied since $\leq_r = \leq_l = \leq$. We prove that condition \eqref{ec6} satisfied. The proof of \eqref{ec7} is dual to \eqref{ec6}.
            
            Assume that $x \leq y$ and that $D(x) \wedge e$ is defined. From $x \leq y$ and \eqref{ic1} we obtain that $D(x) \leq D(y)$. From \eqref{ic2}, $\mathcal{C}_0$ is a local meet-semilattice with partial order $\leq$, hence $D(y) \wedge e$ is defined and $D(y) \wedge e \leq D(x) \wedge e \leq D(x)$. Therefore, the restriction $(y|(D(y) \wedge e))|(D(x) \wedge e)$ exists and we have
                $$ (y|(D(y) \wedge e))|(D(x) \wedge e) \leq y|(D(y) \wedge e) \leq y, $$
            and
                $$ D((y|(D(y) \wedge e))|(D(x) \wedge e)) = D(x) \wedge e. $$
            From the uniqueness of the restriction, we obtain
                $$ y|(D(x) \wedge e) = (y|(D(y) \wedge e))|(D(x) \wedge e) \leq y|(D(y) \wedge e). $$
            On the other hand, since $D(x) \wedge e \leq D(x)$, the restriction $x|(D(x) \wedge e)$ is defined. But
                $$ x|(D(x) \wedge e) \leq x \leq y \quad\text{and}\quad D(x|(D(x) \wedge e)) = D(x) \wedge e. $$
            Therefore, it must be $x|(D(x) \wedge e) = y|(D(x) \wedge e) \leq y|(D(y) \wedge e)$. This proves \eqref{ec6}.
        \end{proof}
    \end{prop}

    \begin{lemma} \label{lema:restriction-corestriction}
        Let $(\mathcal{C},\leq)$ be a locally inductive category and $e \in \mathcal{C}_0$.
        \begin{itemize}
            \item[(a)] If $e \leq D(x)$, then $x|e = R(x|e)|x$.
            \item[(b)] If $e \leq R(x)$, then $e|x = x|D(e|x)$.
        \end{itemize}

        \begin{proof}
            (a) If $e \leq D(x)$, then the restriction $x|e$ is defined and $x|e \leq x$. Therefore, $R(x|e) \leq R(x)$ and the corestriction $R(x|e)|x$ is defined. By definition, $R(x|e)|x$ is the unique element satisfying $R(R(x|e)|x) = R(x|e)$ and $R(x|e)|x \leq x$. On the other hand, we have $R(x|e) = R(x|e)$ and $x|e \leq x$. Hence, it must be $x|e = R(x|e)|x$. (b) Is dual to (a).
        \end{proof}
    \end{lemma}

    Since every restriction semigroupoid $(S,+,\ast)$ is an Ehresmann semigroupoid, it gives rise to a  local biordered Ehresmann category $C(S)$. Conversely, since every locally inductive category $(\mathcal{C},\leq)$ can be regarded as a  local biordered Ehresmann category, it determines an Ehresmann semigroupoid $S(\mathcal{C})$.

    Moreover, the assignments $(S,+,\ast) \mapsto C(S)$ and $(\mathcal{C},\leq_l,\leq_r) \mapsto S(\mathcal{C})$ are mutually inverse. Consequently, the result below allows us to restrict the correspondence established in Theorem \ref{prop:ESN-objetos} to a correspondence between restriction semigroupoids and locally inductive categories.

    \begin{theorem} \label{prop:ESN-objetos-restricao}
        $(S,+,\ast)$ is a restriction semigroupoid if and only if $C(S)$ is a locally inductive category.

        \begin{proof}
            Suppose that $(S,+,\ast)$ is a restriction semigroupoid. From Lemma \ref{lema:restriction-order} we have $\leq_r = \leq_l$. Since the partial orders of $C(S)$ are the partial orders of $(S,+,\ast)$, it follows from Proposition \ref{prop:restriction} that $C(S)$ is a locally inductive category.

            Conversely, suppose that $C(S)$ is a locally inductive category. We only need to prove that $S$ satisfies \eqref{lr4} and \eqref{rr4}. In fact, suppose that $s^\ast t$ is defined and let $e = s^\ast t^+ = D(s) \wedge R(t)$. Since $e \leq R(y)$, we have
            \begin{align*}
                t \otimes D(e|t) &= t \otimes D(e \otimes t) & \ref{lema:restricao}(b') \\
                &= t \otimes D(D(s) \otimes R(t) \otimes t) & \ref{lema:restricao}(c) \\
                &= t \otimes D(D(s) \otimes t) & \eqref{le1} \\
                &= t \otimes D(s \otimes t). & \ref{lema:pseudo-produto-2}(a)
            \end{align*}
            On the other hand, since $e|y \leq y$, it follows from \eqref{ic1} that $D(e|y) \leq D(y)$. Hence,
            \begin{align*}
                t \otimes D(e|t) &= t|(D(e|t)) & \ref{lema:restricao}(b) \\
                &= e|t & \ref{lema:restriction-corestriction}(b) \\
                &= e \otimes t & \ref{lema:restricao}(b') \\
                &= D(s) \otimes R(t) \otimes t \\
                &= D(s) \otimes t. & \eqref{le1}
            \end{align*}
            That is, $s^\ast t = D(s) \otimes t = t \otimes D(s \otimes t) = t(st)^\ast$. This proves \eqref{rr4}. The proof of \eqref{lr4} is analogous. Therefore, $(S,+,\ast)$ is a restrition semigroupoid.
        \end{proof}
    \end{theorem}

\subsection{Inverse Semigroupoids} Inverse semigroupoids have been extensively studied in the literature. In particular, the ESN Theorem for inverse semigroups \cite[Theorem 4.1.8]{lawson1998inverse} was generalized to inverse semigroupoids in \cite[Corollary 3.18]{dewolf2018ehresmann}, where it was shown that inverse semigroupoids correspond to locally inductive groupoids. In this subsection, we prove that locally inductive groupoids are precisely those locally inductive categories that are also groupoids. As a consequence of Theorem \ref{prop:ESN-objetos-restricao}, we recover the “object part” of the ESN Theorem for inverse semigroupoids.

    \begin{defi}
        A \emph{regular semigroupoid} is a semigroupoid $S$ such that, for each $s \in S$, there exists $t \in S$ with $st$ and $ts$ defined and satisfying $sts = s$ and $tst = t$. In this case, $t$ is called a \emph{pseudo-inverse} of $s$. An \emph{inverse semigroupoid} is a regular semigroupoid such that every element has a unique pseudo-inverse, denoted by $s^{-1}$.
    \end{defi}

    \begin{obs} \label{obs:inverse}
        (1) Let $S$ be a semigroupoid such that, for every $s \in S$, there exists $t \in S$ with $st$ and $ts$ defined and satisfying $sts = s$. Then $S$ is regular. A straightforward computation shows that, in this case, $r = tst$ is a pseudo-inverse of $s$.

        (2) A regular semigroupoid $S$ is an inverse semigroupoid if and only if the set of idempotents $E(S) = \{ e \in S \colon ee \text{ is defined and } ee = e \}$ is commutative, in the sense that whenever $e,f \in E(S)$ and $ef$ is defined, then $fe$ is also defined and $ef = fe$.  The proof is analogous to the corresponding result for semigroups, which can be found in \cite[Theorem 1.1.3]{lawson1991}.

        (3) Let $S$ be an inverse semigroupoid. Then $(s^{-1})^{-1} = s$ for every $s \in S$, and whenever $st$ is defined, we have $(st)^{-1} = t^{-1}s^{-1}$. Both identities follow from the uniqueness of the pseudo-inverse.

        (4) Every inverse semigroupoid can be regarded as a restriction semigroupoid by defining $s^+ = ss^{-1}$ and $s^\ast = s^{-1}s$. Therefore, by Theorem \ref{prop:ESN-objetos-restricao}, to each inverse semigroupoid $S$ we can associate a locally inductive category $C(S)$.
    \end{obs}

    We recall that a \emph{groupoid} is a category $\mathcal{G}$ such that every element is invertible, in the sense that for $x \in \mathcal{G}$, there exists a unique element $y \in \mathcal{G}$ such that $xy$ and $yx$ are defined and satisfy $xy = R(x)$ and $yx = D(x)$. In this case, we write $y = x^{-1}$.

   \begin{defi} \cite[Definition 3.4]{dewolf2018ehresmann}
        A \emph{locally inductive groupoid} is a pair $(\mathcal{G},\leq)$, where $\mathcal{G}$ is a groupoid, $\leq$ is a partial order on $\mathcal{G}$, and the following properties are satisfied:
        \begin{enumerate} \Not{ig}
            \item If $x \leq y$, then $x^{-1} \leq y^{-1}$. \label{ig1}
            \item If $x \leq y$, $u \leq v$ and $xu$ and $yv$ are defined, then $xu \leq yv$. \label{ig2}
            \item If $e \in \mathcal{G}_0$ and $e \leq D(x)$, then there exists a unique $x|e \in \mathcal{G}$ such that $x|e \leq x$ and $D(x|e) = e$. The element $x|e$ is called the \emph{restriction} of $x$ to $e$. \label{ig3}
            \item If $e \in \mathcal{G}_0$ and $e \leq R(x)$, then there exists a unique $e|x \in \mathcal{G}$ such that $e|x \leq x$ and $R(e|x) = e$. The element $e|x$ is called the \emph{corestriction} of $x$ to $e$. \label{ig4}
            \item $\mathcal{G}_0$ is a local meet-semilattice with operation $\wedge$ and partial order $\leq$. \label{ig5}
        \end{enumerate}
    \end{defi}

    \begin{lemma}\label{lema:loc-cat}
        Every locally inductive groupoid is a locally inductive category.

        \begin{proof}
            Conditions \eqref{ig2}, \eqref{ig3}, \eqref{ig4} and \eqref{ig5} are precisely \eqref{O2}, \eqref{restriction}, \eqref{corestriction} and \eqref{ic2}. We prove that \eqref{O1} is satisfied. Suppose that $x \leq y$. Then $x^{-1} \leq y^{-1}$ by \eqref{ig1}. Since $xx^{-1}$ and $yy^{-1}$ are defined, it follows from \eqref{ig2} that
                $$ R(x) = xx^{-1} \leq yy^{-1} = R(y). $$
            Analogously, since $x^{-1}x$ and $y^{-1}y$ are defined, we obtain that $D(x) \leq D(y)$.
        \end{proof}
    \end{lemma}

   Moreover, locally inductive groupoids are precisely the locally inductive categories that are also groupoids. This characterization is established in Lemma \ref{lema:inverse-2}. We first present an auxiliary lemma.

    \begin{lemma} \label{lema:inverse-1}
        $S$ is an inverse semigroupoid if and only if $C(S)$ is a locally inductive category and a groupoid.

        \begin{proof}
            Suppose that $S$ is an inverse semigroupoid. Then it is a restriction semigroupoid with $s^+ = ss^{-1}$ and $s^\ast = s^{-1}s$. Thus, $C(S)$ is a locally inductive category by Theorem \ref{prop:ESN-objetos-restricao}. Let $s \in C(S)$. We claim that the pseudo-inverse $s^{-1}$ of $s$ in $S$ is a inverse for $s$ in $C(S)$. In fact, we have
                $$ D(s^{-1}) = (s^{-1})^\ast = (s^{-1})^{-1} s^{-1} = ss^{-1} = R(s), $$
            and analogously $R(s^{-1}) = D(s)$. Therefore, $s^{-1} \circ s$ and $s \circ s^{-1}$ are defined, and in this case
                $$ s^{-1} \circ s = s^{-1}s = s^\ast = D(s) \quad\text{and}\quad s^{-1} \circ s = s^{-1}s = s^+ = R(s). $$
            This proves that $C(S)$ is a groupoid.

           Conversely, suppose that $C(S)$ is both a locally inductive category and a groupoid. We show that $S$ is a regular semigroupoid and that $E(S) \subseteq U$. Since $U$ is commutative by \eqref{rr2}, it then follows from Remark \ref{obs:inverse}(2) that $S$ is an inverse semigroupoid. Let $s \in S$. Since $D(s) = R(s^{-1})$ we have that $s \otimes s^{-1}$ is defined, and
            \begin{align*}
                s \otimes s^{-1} &= [s|(D(s) \wedge R(s^{-1}))] \circ [(D(s) \wedge R(s)^{-1})|s^{-1}] \\
                &= [s|D(s)] \circ [R(s)^{-1}|s^{-1}] \\
                &= s \circ s^{-1} \\
                &= R(s).
            \end{align*}
            Furthermore, since $D(R(s)) = R(s)$, it follows that $R(s) \otimes s$ is defined. From Lemma \ref{lema:restricao}(b'), we conclude that $R(s) \otimes s = R(s)|s = s$. Therefore $s \otimes s^{-1} \otimes s = R(s) \otimes s = s$. By Remark \ref{obs:inverse}(1), we obtain that $S$ is a regular semigroupoid. On the other hand, suppose that $e \in E(S)$ and let $f = D(e) \wedge R(e)$. From $e = e \otimes e$, we have
                $$ D(e) = D((e|f) \circ (f|e)) = D(f|e). $$
            Since $f|e \leq e$, it follows from the uniqueness of the restriction that $f|e = e|D(e) = e$. Analogously, we have $R(e) = R(e|f)$ and $e|f \leq e$. Therefore $e|f = R(e)|e = e$. But then
                $$ e^+ = R(e) = e \circ e^{-1} = (e|f) \circ (f|e) \circ e^{-1} = e \circ e \circ e^{-1} = e \circ R(e) \overset{\eqref{C3}}{=} e. $$
            This proves $E(S) \subseteq U$. Since $C(S)$ is a locally inductive category, $S$ is a restriction semigroupoid. Therefore $U$, and hence $E(S)$, are commutative by \eqref{lr2}. By Remark \ref{obs:inverse}(2), we conclude that $S$ is an inverse semigroupoid. 
        \end{proof}
    \end{lemma}

    \begin{lemma} \label{lema:inverse-2}
        If $C(S)$ is a locally inductive category and a groupoid, then it is a locally inductive groupoid.

        \begin{proof}
            It remains to prove that \eqref{ig1} is satisfied. Since $C(S)$ is a locally inductive category and a groupoid, it follows from Lemma \ref{lema:inverse-1} that $S$ is an inverse semigroupoid. Therefore,
            \begin{align*}
                s \leq t &\iff s^+t \text{ is defined and } s^+t = s \iff s^{-1}t \text{ is defined and } ss^{-1}t = s, \\
                &\iff ts^\ast \text{ is defined and } ts^\ast = s \iff ts^{-1} \text{ is defined and } ts^{-1}s = s.
            \end{align*}
            From Remark \ref{obs:inverse}(3), we obtain $s^{-1} = (ss^{-1}t)^{-1} = t^{-1}(s^{-1})^{-1} s^{-1}$. Thus, $s^{-1} \leq t^{-1}$.
        \end{proof}
    \end{lemma}

    Combining Theorem \ref{prop:ESN-objetos} with Lemmas \ref{lema:loc-cat}, \ref{lema:inverse-1} and \ref{lema:inverse-2}, we obtain the correspondence between inverse semigroupoid and locally inductive groupoids, as in \cite[Corollary 3.18]{dewolf2018ehresmann}.

    \begin{theorem} \label{prop:ESN-inverse}
        $S$ is an inverse semigroupoid if and only if $C(S)$ is a locally inductive groupoid.
    \end{theorem}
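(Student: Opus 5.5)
The statement is the combination of Lemmas \ref{lema:loc-cat}, \ref{lema:inverse-1} and \ref{lema:inverse-2}, and I would prove the two implications separately. Throughout, $C(S)$ carries the single order $\leq = \leq_l = \leq_r$, which is legitimate once $S$ is a restriction semigroupoid by Lemma \ref{lema:restriction-order}.

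\emph{($\Rightarrow$)} Assume $S$ is an inverse semigroupoid, regarded as a restriction semigroupoid via $s^+ = ss^{-1}$ and $s^\ast = s^{-1}s$ as in Remark \ref{obs:inverse}(4).
\begin{enumerate}
\item By the forward direction of Lemma \ref{lema:inverse-1}, $C(S)$ is a locally inductive category and a groupoid.
\item Lemma \ref{lema:inverse-2} then shows that $C(S)$ is a locally inductive groupoid.
\item In particular, axiom \eqref{ig1}, the preservation of the order by inversion, is supplied by Lemma \ref{lema:inverse-2}. That lemma computes $s^{-1} = t^{-1}(s^{-1})^{-1}s^{-1}$ from $s = ss^{-1}t$ using Remark \ref{obs:inverse}(3). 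One should also confirm that the groupoid inverse of $s$ in $C(S)$ is exactly the pseudo-inverse $s^{-1}$, which is shown in the proof of Lemma \ref{lema:inverse-1}.
\end{enumerate}

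\emph{($\Leftarrow$)} Assume $C(S)$ is a locally inductive groupoid.
\begin{enumerate}
\item By Lemma \ref{lema:loc-cat}, $C(S)$ is a locally inductive category. Since it is a groupoid by hypothesis, it is both at once.
\item The converse direction of Lemma \ref{lema:inverse-1} then gives that $S$ is an inverse semigroupoid.
\item Behind that step lies the argument that $s \otimes s^{-1} \otimes s = s$, so $S$ is regular, together with $E(S) \subseteq U$ and the commutativity of $U$ from \eqref{lr2}. Remark \ref{obs:inverse}(2) then finishes the argument.
\end{enumerate}

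The only delicate point is bookkeeping. We must make sure that the hypothesis "$C(S)$ is a locally inductive groupoid" is applied to the order structure that $C(S)$ actually carries. Here $S$ is an Ehresmann semigroupoid, so $C(S)$ carries both $\leq_l$ and $\leq_r$, whereas a locally inductive groupoid has a single order $\leq$. In the backward direction, I would take the groupoid order to be the common order $\leq_l = \leq_r$. Read this way, $C(S)$ is a locally inductive category, and Theorem \ref{prop:ESN-objetos-restricao} makes $S$ a restriction semigroupoid. Lemma \ref{lema:inverse-1} relies on exactly this.

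Apart from this identification, no new computation is needed.
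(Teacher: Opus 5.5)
Your proposal is correct and follows essentially the paper's argument: the paper obtains this theorem by combining Lemmas \ref{lema:loc-cat}, \ref{lema:inverse-1} and \ref{lema:inverse-2}. As you do, the forward direction goes through Lemma \ref{lema:inverse-1} and then Lemma \ref{lema:inverse-2}, and the backward direction through Lemma \ref{lema:loc-cat} and then the converse of Lemma \ref{lema:inverse-1}. Your point that the single groupoid order must be read as the common order $\leq_l = \leq_r$ is the same convention the paper uses implicitly via Proposition \ref{prop:restriction}.
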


\subsection{Semigroups and Categories} In this subsection, we consider the case in which the Ehresmann semigroupoid $S$ is either a semigroup or a category. We show that $S$ is a semigroup (respectively, a category) if and only if the local meet-semilattice $U$ is a semigroup (respectively, a category). Throughout this subsection, $(S,+,\ast)$ denotes an Ehresmann semigroupoid with set of projections $U$.\\ \label{ssec:3.3}

    Recall that a semigroupoid $S$ is a semigroup if and only if $S^{(2)} = S \times S$. In particular, if a local meet-semilattice $U$ is a semigroup, we simply call it a meet-semilattice. By Remark~\ref{obs:lawson-categories}, local Ehresmann categories whose object set $\mathcal{C}_0$ is a meet-semilattice are called biordered Ehresmann categories and coincide with the Ehresmann categories introduced in \cite[\textsection 4]{lawson1991}.

    \begin{prop} \label{lema:ESN-semigrupo}
        The following conditions are equivalent:
        \begin{itemize}
            \item[(a)] $S$ is a semigroup.
            \item[(b)] $U$ is a meet-semilattice.
            \item[(c)] $C(S)$ is a biordered Ehresmann category.
        \end{itemize}

        \begin{proof}
            (b) is equivalent to (c) by definition. We prove that (a) is equivalent to (b). In fact, if $S$ is a semigroup, then $S^{(2)} = S \times S$. Hence,
                $$ U^{(2)} = (U \times U) \cap S^{(2)} = (U \times U) \cap (S \times S) = U \times U. $$
            Therefore, $U$ is a semigroup, hence a meet-semilattice. Conversely, suppose that $U$ is a meet-semilattice and let $s,t \in S$. Since $C(S)_0 = U$ is a meet-semilattice, it follows that $D(s) \wedge R(t)$ is defined. But then $s \otimes t$ is defined. Thus, it follows from Theorem \ref{prop:ESN-objetos} that $st = s \otimes t$ is defined. That is, $S^{(2)} = S \times S$ and $S$ is a semigroup.
        \end{proof}
    \end{prop}

   \begin{obs} In the context of the ESN-type theorem for one-sided restriction semigroupoids \cite[Lemma 7.11]{lrspgesn}, it was shown that a left restriction semigroupoid $(S,+)$ is a semigroup if and only if the set of projections $S^{+}$ is a meet-semilattice and $C(S)$ is non-degenerate, in the sense that for every $x \in C(S)$ there exists $e \in C(S)_0$ such that $x|e$ is defined. This additional condition is automatically satisfied for two-sided Ehresmann semigroupoids, since $x|x^{\ast} = x \otimes x^{\ast}$ is defined for all $x \in C(S)$. Hence, the simplicity of Proposition \ref{lema:ESN-semigrupo} is a consequence of the structural features of two-sided Ehresmann semigroupoids.\end{obs}

   Proposition \ref{lema:ESN-semigrupo} establishes the correspondence between Ehresmann semigroups and biordered Ehresmann categories, as originally formulated by Lawson in \cite[Theorem 4.24]{lawson1991}, which in turn generalizes the corresponding results for inverse semigroups \cite[Theorem 4.1.8]{lawson1998inverse} and for restriction semigroups \cite[Theorem 3.9]{hollings2010extending}. \\

    We now introduce the following terminology. We say that a local meet-semilattice $U$ is \emph{locally complete} if it is a category. A triple $(\mathcal{C},\leq_l,\leq_r)$ is called a \emph{complete  local biordered Ehresmann category} if it is a  local biordered Ehresmann category and $\mathcal{C}_0$ is a locally complete meet-semilattice.

    \begin{prop} \label{lema:ESN-cat}
        The following conditions are equivalent:
        \begin{itemize}
            \item[(a)] $S$ is a category.
            \item[(b)] $U$ is a locally complete meet-semilattice.
            \item[(c)] $C(S)$ is a complete  local biordered Ehresmann category.
        \end{itemize}

        \begin{proof}
           Once again, conditions (b) and (c) are equivalent by definition. We now prove that (a) is equivalent to (b).
            
            Suppose that $S$ has a category structure $D,R \colon S \to S_0$. Then, for every $e \in U \subseteq S$, there are identities $D(e),R(e) \in S_0 \subseteq S$ such that $eD(e)$ and $R(e)e$ are defined. Since $ee$ is defined, it follows that $D(e) = R(e)$. From \eqref{rr1} and \eqref{C3}, we obtain that $D(e) = D(e)D(e)^\ast = D(e)^\ast$. That is, $D(e) = R(e) \in U$. Therefore, the category structure on $S$ restricts to a category structure $D,R \colon U \to S_0 \subseteq U$ on $U$. This shows that $U$ is a locally complete meet-semilattice.

            Conversely, suppose that $U$ is a locally complete meet-semilattice with category structure $d,r \colon U \to U_0$. Define $D,R \colon S \to U_0$ by $D(s) = d(s^\ast)$ and $R(s) = r(s^+)$, for all $s \in S$. Then
                $$ s \otimes t \text{ is defined} \iff s^\ast \wedge t^+ \text{ is defined} \iff d(s^\ast) = r(t^+) \iff D(s) = R(t). $$
            By Theorem \ref{prop:ESN-objetos}, $s \otimes t$ is defined if and only if $st$ is defined. Thus, $st$ is defined if and only if $D(s) = R(t)$. Furthermore, if $D(s) = R(t)$, then $(st)^+ = s^+(st)^+$ by Lemma \ref{lema:le}(b). Hence,
                $$ R(st) = r((st)^+) = r(s^+(st)^+) = r(s^+) = R(s). $$
            Analogously, we conclude that $D(st) = D(t)$. Therefore, $(S,U_0,D,R,\otimes)$ satisfies \eqref{C1}. Condition \eqref{C2} follows from the associativity of the pseudo-product $\otimes$. To prove \eqref{C3}, let $e \in U_0 \subseteq S$. Then $e^+ = e = e^\ast$ by Lemma \ref{lema:le}(a) and its dual. Hence,
                $$ D(e) = d(e^\ast) = d(e) = e = r(e) = r(e^+) = R(e). $$
            Suppose that $d(s^\ast) = D(s) = e$. From Lemma \ref{lema:restricao}(b), we obtain that $s \otimes e = s|(s^\ast \wedge e)$. Hence
                $$ se  = s \otimes e = s|(s^\ast \wedge e) = s|(s^\ast \wedge d(s^\ast)) = s|s^\ast = s. $$
            Analogously, if $R(t) = e$, then $et = t$. This proves that $(S,U_0,D,R,\otimes)$ is a category.
        \end{proof}
    \end{prop}

    \begin{obs}Proposition \ref{lema:ESN-cat} shows that $(S,+,\ast)$ admits a category structure if and only if $U$ admits a category structure, and that, in this case, the structure in $S$ is an extension of the structure on $U$. In the context of the ESN-type theorem for one-sided restriction semigroupoids \cite[Lemma 7.9]{lrspgesn}, it was shown that $(S,+)$ admits a category structure if and only if $U$ admits a category structure and $C(S)$ is both non-degenerate and unitary, in the sense that for each identity $e \in U_0$, whenever the restriction $x|e$ is defined, one has $x|e = x$. Therefore, the fact that the category structure is always preserved by the correspondence is a distinctive feature of the two-sided version of the construction.\end{obs} 

    We conclude the section by summarizing the particular cases of the object part of the ESN-type theorems in two diagrams. The first diagram relates the subclasses of Ehresmann semigroupoids we have discussed in this section, Ehresmann semigroupoids being the most general class, and inverse monoids (inverse semigroupoids that are semigroups and categories) being the less general.

    \begin{center}
        \begin{tikzpicture}[yscale=0.85]
            \tikzstyle{every path}=[draw];
            \tikzstyle{every node}=[align=center];

            \node (Espd) at (0,0) {Ehresmann\\ Semigroupoids};
            \node (Ecat) at (0,-2) {Ehresmann\\ Categories};
            \node (Rspd) at (-3,-2) {Restriction\\ Semigroupoids};
            \node (Rcat) at (-3,-4) {Restriction\\ Categories};
            \node (Ispd) at (-6,-4) {Inverse\\ Semigroupoids};
            \node (Icat) at (-6,-6) {Inverse\\ Categories};
            \node (Esgp) at (3,-2) {Ehresmann\\ Semigroups};
            \node (Emon) at (3,-4) {Ehresmann\\ Monoids};
            \node (Rsgp) at (0,-4) {Restriction\\ Semigroups};
            \node (Rmon) at (0,-6) {Restriction\\ Monoids};
            \node (Isgp) at (-3,-6) {Inverse\\ Semigroups};
            \node (Imon) at (-3,-8) {Inverse\\ Monoids};

            \path (Espd) to (Rspd); \path (Espd) to (Ecat); \path (Espd) to (Esgp);
            \path (Rspd) to (Ispd); \path (Rspd) to (Rcat); \path (Rspd) to (Rsgp);
            \path (Ecat) to (Rcat); \path (Ecat) to (Emon);
            \path (Esgp) to (Rsgp); \path (Esgp) to (Emon);
            \path (Ispd) to (Icat); \path (Ispd) to (Isgp);
            \path (Rcat) to (Icat); \path (Rcat) to (Rmon);
            \path (Rsgp) to (Isgp); \path (Rsgp) to (Rmon);
            \path (Emon) to (Rmon);
            \path (Icat) to (Imon); \path (Isgp) to (Imon); \path (Rmon) to (Imon);
        \end{tikzpicture}
    \end{center}

    The second diagram relates the subclasses of local biordered Ehresmann categories that correspond to each subclass of Ehresmann semigroupoids through the correspondence on Theorem \ref{prop:ESN-objetos}. In this diagram, the prefix ``L.'' means ``local/locally'', the absence of this prefix indicates that $\mathcal{C}_0$ is a meet-semilattice. The prefix ``C.'' means ``complete'', indicating that $\mathcal{C}_0$ has a category structure. The prefix ``B.'' means ``biordered'', used to distinguish Lawson's Ehresmann categories from the Ehresmann categories in the previous diagram.

    \begin{center}
        \begin{tikzpicture}[yscale=0.85]
            \tikzstyle{every path}=[draw];
            \tikzstyle{every node}=[align=center];

            \node (Espd) at (0,0) {L.B. Ehresmann\\ Categories};
            \node (Ecat) at (0,-2) {C.L.B. Ehresmann\\ Categories};

            \node (Rspd) at (-3,-2) {L. Inductive \\ Categories};
            \node (Rcat) at (-3,-4) {L.C. Inductive\\ Categories};

            \node (Ispd) at (-6,-4) {L. Inductive\\ Groupoids};
            \node (Icat) at (-6,-6) {L.C. Inductive\\ Groupoids};

            \node (Esgp) at (3,-2) {B. Ehresmann\\ Categories};
            \node (Emon) at (3,-4) {C.B. Ehresmann\\ Categories};

            \node (Rsgp) at (0,-4) {Inductive\\ Categories};
            \node (Rmon) at (0,-6) {C. Inductive\\ Categories};

            \node (Isgp) at (-3,-6) {Inductive\\ Groupoids};
            \node (Imon) at (-3,-8) {C. Inductive\\ Groupoids};

            \path (Espd) to (Rspd); \path (Espd) to (Ecat); \path (Espd) to (Esgp);
            \path (Rspd) to (Ispd); \path (Rspd) to (Rcat); \path (Rspd) to (Rsgp);
            \path (Ecat) to (Rcat); \path (Ecat) to (Emon);
            \path (Esgp) to (Rsgp); \path (Esgp) to (Emon);
            \path (Ispd) to (Icat); \path (Ispd) to (Isgp);
            \path (Rcat) to (Icat); \path (Rcat) to (Rmon);
            \path (Rsgp) to (Isgp); \path (Rsgp) to (Rmon);
            \path (Emon) to (Rmon);
            \path (Icat) to (Imon); \path (Isgp) to (Imon); \path (Rmon) to (Imon);
        \end{tikzpicture}
    \end{center}

\section{The ESN-type Theorems} In this section, we extend the correspondence established in Theorem \ref{prop:ESN-objetos} to an isomorphism between the category of Ehresmann semigroupoids with $(2,1,1)$-morphisms and the category of local biordered Ehresmann categories with inductive functors. By restricting to restriction semigroupoids, we obtain further ESN-type correspondences by weakening the notion of morphism in two distinct directions, which leads to two additional category isomorphisms. More precisely, we show that the category of restriction semigroupoids with $\vee$-premorphisms is isomorphic to the category of locally inductive categories with ordered functors, and that the category of restriction semigroupoids with $\wedge$-premorphisms is isomorphic to the category of locally inductive categories with inductive prefunctors.

\subsection{(2,1,1)-Morphisms and Inductive Functors} Throughout this subsection, $(S,+,\ast)$ and $(T,+,\ast)$ denote Ehresmann semigroupoids with sets of projections $U$ and $V$, respectively, while $(\mathcal{C},\leq_l,\leq_r)$ and $(\mathcal{D},\leq_l,\leq_r)$ denote local Ehresmann categories. We write $C(S)$ and $C(T)$ for the local Ehresmann categories corresponding to $S$ and $T$ under Theorem \ref{prop:ESN-objetos}.

    \begin{defi}
        A function $\varphi \colon S \to T$ between Ehresmann semigroupoids is called a \emph{(2,1,1)-morphism} if it satisfies the following conditions:
        \begin{enumerate} \Not{m}
            \item if $st$ is defined, then $\varphi(s)\varphi(t)$ is defined and $\varphi(s)\varphi(t) = \varphi(st)$; \label{m1}
            \item $\varphi(s^+) = \varphi(s)^+$ and $\varphi(s^\ast) = \varphi(s)^\ast$, for every $s \in S$. \label{m2}
        \end{enumerate}
    \end{defi}

    The term “(2,1,1)-morphism” refers to a function that preserves one partial binary operation and two unary operations, namely, the composition $\cdot \colon S^{(2)} \subseteq S \times S \to S$ and the left and right restriction operations $+ \colon S \to S$ and $\ast \colon S \to S$.

    \begin{lemma} \label{lemma:composition-morphisms}
        The composition of (2,1,1)-morphisms is a (2,1,1)-morphism.

        \begin{proof}
            Let $\varphi \colon S \to T$ and $\psi \colon T \to R$ be (2,1,1)-morphisms. Suppose that $st$ is defined. By condition \eqref{m1} for $\varphi$, the product $\varphi(s)\varphi(t)$ is defined in $T$. Applying \eqref{m1} for $\psi$, it follows that $\psi(\varphi(s))\psi(\varphi(t))$ is defined in $R$. Moreover,
                $$ \psi(\varphi(s))\psi(\varphi(t)) = \psi(\varphi(s)\varphi(t)) = \psi(\varphi(st)). $$
            Hence, $\psi \circ \varphi$ satisfies \eqref{m1}. On the other hand, for all $s \in S$, we have
                $$ \psi(\varphi(s^+)) = \psi(\varphi(s)^+) = \psi(\varphi(s))^+, $$
            and
                $$ \psi(\varphi(s^\ast)) = \psi(\varphi(s)^\ast) = \psi(\varphi(s))^\ast, $$
            where in both cases the first equality follows from \eqref{m2} for $\varphi$ and the second from \eqref{m2} for $\psi$. Therefore, $\psi \circ \varphi$ also satisfies \eqref{m2}.
        \end{proof}
    \end{lemma}

    Clearly, the identity map $\mathrm{id}_S \colon S \to S$ is a $(2,1,1)$-morphism. Hence, by Lemma \ref{lemma:composition-morphisms}, Ehresmann semigroupoids together with $(2,1,1)$-morphisms form a category, which we denote by $ESGPD_m$.

    We recall that a \emph{full subcategory} of a category $\mathcal{C}$ is a category $\mathcal{C}'$ such that $\mathcal{C}'_0 \subseteq \mathcal{C}_0$ and, for every $X,Y \in \mathcal{C}'_0$, any morphism $x \in \mathcal{C}$ with $D(x) = X$ and $R(x) = Y$ belongs to $\mathcal{C}'$. In other words, $\mathcal{C}'$ contains all morphisms of $\mathcal{C}$ between its objects.

    We denote by $rSGPD_m$ and $iSGPD_m$ the full subcategories of $ESGPD_m$ whose objects are restriction semigroupoids and inverse semigroupoids, respectively.

    \begin{defi}
        A function $\varphi \colon \mathcal{C} \to \mathcal{D}$ between local Ehresmann categories is called an \emph{inductive functor} if it satisfies the following conditions:
        \begin{enumerate} \Not{if}
            \item $\varphi(D(x)) = D(\varphi(x))$ and $\varphi(R(x)) = R(\varphi(x))$, for every $x \in \mathcal{C}$; \label{if1}
            \item if $D(x) = R(y)$, then $\varphi(x) \circ \varphi(y) = \varphi(x \circ y)$; \label{if2}
            \item if $x \leq_l y$, then $\varphi(x) \leq_l \varphi(y)$; \label{if3}
            \item if $x \leq_r y$, then $\varphi(x) \leq_r \varphi(y)$; \label{if4}
            \item if $e \wedge f$ is defined, then $\varphi(e) \wedge \varphi(f)$ is defined and $\varphi(e \wedge f) = \varphi(e) \wedge \varphi(f)$. \label{if5}
        \end{enumerate}
    \end{defi}

    \begin{lemma} \label{lema:ESN-morfismos}
        Let $\varphi \colon S \to T$ be a function, and regard $\varphi \colon C(S) \to C(T)$ as the same underlying map. Then $\varphi \colon S \to T$ is a $(2,1,1)$-morphism if and only if $\varphi \colon C(S) \to C(T)$ is an inductive functor.

        \begin{proof}
            Suppose that $\varphi \colon S \to T$ is a (2,1,1)-morphism. We prove that $\varphi$ is an inductive functor.\\

            \noindent\eqref{if1} Is precisely \eqref{m2}.\\
            
            \noindent\eqref{if2} If $s \circ t$ is defined, then $st$ is defined and $s \circ t = st$. Hence, by \eqref{m1}, we have $\varphi(s \circ t) = \varphi(st) = \varphi(s)\varphi(t)$. From \eqref{if1}, we obtain $D(\varphi(s)) = \varphi(D(s)) = \varphi(R(t)) = R(\varphi(t))$. Therefore, $\varphi(s) \circ \varphi(t)$ is defined, and $\varphi(s) \circ \varphi(t) = \varphi(s)\varphi(t) = \varphi(s \circ t)$.\\
            
            \noindent\eqref{if3} Suppose that $s \leq_l t$. Then $s^+t$ is defined and $s^+t = s$. Hence,
                $$ \varphi(s) = \varphi(s^+t) \overset{\eqref{m1}}{=} \varphi(s^+)\varphi(t) \overset{\eqref{m2}}{=} \varphi(s)^+\varphi(t). $$
            That is, $\varphi(s) \leq_l \varphi(t)$.\\
            
            \noindent\eqref{if4} Is analogous to \eqref{if3}. \\
            
            \noindent\eqref{if5} Let $e,f \in C(S)_0$ be such that $e \wedge f$ is defined. That is, $ef$ is defined. By \eqref{m1}, it follows that $\varphi(e)\varphi(f)$ is defined and that $\varphi(e)\varphi(f) = \varphi(ef)$. On the other hand, since $e = e^{+}$ and $f = f^{+}$, we have $\varphi(e) = \varphi(e^+) = \varphi(e)^+ \in C(T)_0$ by \eqref{m2}, and similarly $\varphi(f) \in C(T)_0$. Hence, $\varphi(e) \wedge \varphi(f)$ is defined, and
                $$ \varphi(e) \wedge \varphi(f) = \varphi(e)\varphi(f) = \varphi(ef) = \varphi(e \wedge f). $$
            This shows that $\varphi$ is an inductive functor. 
            
            Conversely, suppose that $\varphi \colon C(S) \to C(T)$ is an inductive functor. We prove that $\varphi$ is a (2,1,1)-morphism.\\

            \noindent\eqref{m1} Suppose that $st$ is defined and set $e = s^\ast t^+ = D(s) \wedge R(t)$. We claim that $\varphi(s|e) = \varphi(s),|,\varphi(e)$. Indeed, by \eqref{if1} we have $D(\varphi(s|e)) = \varphi(D(s|e)) = \varphi(e)$ by \eqref{if1}. Moreover, since $s|e \leq_r s$, it follows from \eqref{if4} that $\varphi(s|e) \leq_r \varphi(s)$. By the uniqueness of the restriction, we conclude that $\varphi(s|e) = \varphi(s)|\varphi(e)$. Analogously, we obtain $\varphi(e|t) = \varphi(e)|\varphi(t)$. On the other hand, by \eqref{if5} and \eqref{if1}, we have $\varphi(e) = \varphi(D(s) \wedge R(t)) = D(\varphi(s)) \wedge R(\varphi(t))$. Therefore, $\varphi(s) \otimes \varphi(t)$ is defined and
            \begin{align*}
                \varphi(s) \otimes \varphi(t) &= (\varphi(s)|\varphi(e)) \circ (\varphi(e)|\varphi(t)) \\
                &= \varphi(s|e) \circ \varphi(e|t) \\
                &= \varphi((s|e) \circ (e|t)) & \eqref{if2} \\
                &= \varphi(s \otimes t).
            \end{align*}
            Hence, $\varphi(s)\varphi(t) = \varphi(st)$.\\

            \noindent\eqref{m2} Is precisely \eqref{if1}. Thus, $\varphi$ is a (2,1,1)-morphism.
        \end{proof}
    \end{lemma}

\begin{lemma}\label{coro:composition-ifunctors}
The composition of inductive functors is an inductive functor.
\begin{proof}
Let $\varphi \colon \mathcal{C} \to \mathcal{D}$ and $\psi \colon \mathcal{D} \to \mathcal{E}$ be inductive functors. By Theorem \ref{prop:ESN-objetos}, we may assume that $\mathcal{C} = C(S)$, $\mathcal{D} = C(T)$, and $\mathcal{E} = C(R)$, where $S = S(\mathcal{C})$, $T = S(\mathcal{D})$, and $R = S(\mathcal{E})$ are Ehresmann semigroupoids.  

Since $\varphi$ and $\psi$ are inductive functors, it follows from Lemma \ref{lema:ESN-morfismos} that $\varphi \colon S \to T$ and $\psi \colon T \to R$ are $(2,1,1)$-morphisms. By Lemma \ref{lemma:composition-morphisms}, their composition $\psi \circ \varphi \colon S \to R$ is also a $(2,1,1)$-morphism. Applying Lemma \ref{lema:ESN-morfismos} once again, we conclude that $\psi \circ \varphi \colon C(S) \to C(R)$ is an inductive functor.
\end{proof}
\end{lemma}

    From Lemma~\ref{coro:composition-ifunctors}, we conclude that local Ehresmann categories together with inductive functors form a category, which we denote by $lECAT_{if}$. Furthermore, we denote by $liCAT_{if}$ and $liGPD_{if}$ the full subcategories of $lECAT_{if}$ whose objects are locally inductive categories and locally inductive groupoids, respectively.
    
    Now, we prove the ESN Theorem to Ehresmann semigroupoids.

    \begin{theorem} \label{teo:ESN-morphism}
        The categories $ESGPD_m$ and $lECAT_{if}$ are isomorphic.

        \begin{proof}
            We define a functor $C \colon ESGPD_m \to lECAT_{if}$ by assigning $S \mapsto C(S)$ for each Ehresmann semigroupoid $S$, and $\varphi \mapsto C\varphi := \varphi$ for each (2,1,1)-morphism. Notice that $C$ is a functor. Indeed, it follows by Theorem \ref{prop:ESN-objetos} that $C(S)$ is a  local biordered Ehresmann category, and by Lemma \ref{lema:ESN-morfismos} that $C\varphi$ is an inductive functor. Moreover, if $\varphi \colon S \to T$ is a (2,1,1)-morphism, then
                $$ D(C\varphi) = C(S) = C(D(\varphi)) \quad\text{and}\quad R(C\varphi) = C(T) = C(R(\varphi)). $$
            If $\psi \colon T \to R$ is another (2,1,1)-morphism, then
                $$ C(\psi \circ \varphi) = \psi \circ \varphi = C\psi \circ C\varphi. $$
            Finally, we have $C1_S = 1_S = 1_{C(S)}$. This shows that $C \colon ESGPD_m \to lECAT_{if}$ is a functor. 
            
            Analogously, we define a functor $S \colon lECAT_{if} \to ESGPD_m$by assigning $\mathcal{C} \mapsto S(\mathcal{C})$ for each local biordered Ehresmann category $\mathcal{C}$, and $\varphi \mapsto S\varphi := \varphi$ for each inductive functor. From Theorem \ref{prop:ESN-objetos} and the fact that both $C$ and $S$ act as the identity on morphisms, it follows that $C \circ S = id_{ESGPD_m}$ and $S \circ C = id_{lECAT_{if}}$. Hence, $C$ and $S$ define isomorphisms between the categories $ESGPD_m$ and $lECAT_{if}$.
        \end{proof}
    \end{theorem}

    Theorem \ref{teo:ESN-morphism} generalizes the ESN Theorem for Ehresmann semigroups \cite[Theorem~4.24]{lawson1991}. In fact, it follows from Lemma \ref{lema:ESN-semigrupo} that the category isomorphism of Theorem \ref{teo:ESN-morphism} restricts to the isomorphism between the categories of Ehresmann semigroups and biordered Ehresmann categories.

    On the other hand, by Theorem \ref{prop:ESN-objetos-restricao}, the object part of the functors from Theorem \ref{teo:ESN-morphism} restricts to a bijective correspondence between the objects of the categories $rSGPD_m$ and $liCAT_{if}$. Since the latter are full subcategories of $ESGPD_m$ and $lECAT_{if}$, respectively, the following result is an immediate consequence of Theorem \ref{teo:ESN-morphism}.

    \begin{corollary} \label{coro:ESN-morphism-r}
        The categories $rSGPD_m$ and $liCAT_{if}$ are isomorphic.
    \end{corollary}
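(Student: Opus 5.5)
The plan is to restrict the isomorphism $C \colon ESGPD_m \to lECAT_{if}$ of Theorem \ref{teo:ESN-morphism}, together with its inverse $S$, to the full subcategories $rSGPD_m$ and $liCAT_{if}$. Restricting an isomorphism of categories to full subcategories gives an isomorphism as soon as the object maps restrict bijectively. Indeed, fullness means that every morphism between two chosen objects lies in the subcategory. On morphisms both functors act as the identity, and by Lemma \ref{lema:ESN-morfismos} they match $(2,1,1)$-morphisms with inductive functors. So the hom-sets between corresponding objects agree automatically.

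The work is on objects, in two directions.

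\begin{itemize}
\item If $S$ is a restriction semigroupoid, then $C(S)$ is a locally inductive category by Theorem \ref{prop:ESN-objetos-restricao}. By Proposition \ref{prop:restriction} it is regarded as the local biordered Ehresmann category $(C(S),\leq,\leq)$, so it is an object of $liCAT_{if}$.
\item Conversely, let $(\mathcal{C},\leq)$ be a locally inductive category, viewed as $(\mathcal{C},\leq,\leq)$ via Proposition \ref{prop:restriction}. Set $S = S(\mathcal{C})$. Theorem \ref{prop:ESN-objetos}(b) gives $C(S(\mathcal{C})) = \mathcal{C}$, which is locally inductive. Theorem \ref{prop:ESN-objetos-restricao} then shows that $S(\mathcal{C})$ is a restriction semigroupoid.
\end{itemize}

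Hence $C$ sends objects of $rSGPD_m$ into $liCAT_{if}$, and $S$ sends objects of $liCAT_{if}$ into $rSGPD_m$. The identities $C\circ S=\mathrm{id}$ and $S\circ C=\mathrm{id}$ hold already in the ambient categories, so they persist after restriction.

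The one point needing care is the identification of objects of $liCAT_{if}$. These are locally inductive categories regarded inside $lECAT_{if}$ with $\leq_l=\leq_r$. We need $C(S)$ for a restriction semigroupoid $S$ to land in exactly that form. This follows from Lemma \ref{lema:restriction-order}, which gives $\leq_l=\leq_r$ in $C(S)$. I expect no real obstacle beyond stating this bookkeeping clearly.
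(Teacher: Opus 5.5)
Your proposal is correct and follows the paper's argument. You restrict the isomorphism of Theorem~\ref{teo:ESN-morphism} to the full subcategories, and you use Theorem~\ref{prop:ESN-objetos-restricao} to see that the object maps restrict bijectively. You also spell out two points the paper leaves implicit: the converse direction on objects via Theorem~\ref{prop:ESN-objetos}(b), and the identification $(\mathcal{C},\leq)\leftrightarrow(\mathcal{C},\leq,\leq)$ via Proposition~\ref{prop:restriction} and Lemma~\ref{lema:restriction-order}.
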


    By Theorem \ref{prop:ESN-objetos-restricao} and Lemma \ref{lema:ESN-semigrupo}, restriction semigroups correspond to inductive categories. Hence, Corollary \ref{coro:ESN-morphism-r} generalizes the ESN Theorem for restriction semigroups \cite[Theorem 5.7]{lawson1991}, where restriction semigroups were referred to as \emph{idempotent-connected Ehresmann semigroups}.

    The following result is a consequence of Theorems \ref{prop:ESN-inverse} and \ref{teo:ESN-morphism}.

    \begin{corollary} \label{coro:ESN-morphism-i}
        The categories $iSGPD_m$ and $liGPD_{if}$ are isomorphic.
    \end{corollary}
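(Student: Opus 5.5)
The plan is to restrict the isomorphism of Theorem \ref{teo:ESN-morphism} to full subcategories, exactly as was done for Corollary \ref{coro:ESN-morphism-r}. Write $C \colon ESGPD_m \to lECAT_{if}$ and $S \colon lECAT_{if} \to ESGPD_m$ for the mutually inverse functors from Theorem \ref{teo:ESN-morphism}. Both act as the identity on morphisms. Since $iSGPD_m$ and $liGPD_{if}$ are full subcategories, it suffices to check two things: $C$ sends objects of $iSGPD_m$ into objects of $liGPD_{if}$, and $S$ sends objects of $liGPD_{if}$ into objects of $iSGPD_m$. Fullness then guarantees that the morphisms match automatically, because a morphism between objects of the subcategories is just a $(2,1,1)$-morphism (respectively, an inductive functor), and these correspond bijectively by Lemma \ref{lema:ESN-morfismos}.

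For the first inclusion, let $S$ be an inverse semigroupoid, regarded as an Ehresmann semigroupoid through Remark \ref{obs:inverse}(4). Theorem \ref{prop:ESN-inverse} says directly that $C(S)$ is a locally inductive groupoid.

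For the converse inclusion, let $(\mathcal{G},\leq)$ be a locally inductive groupoid, viewed as the local biordered Ehresmann category $(\mathcal{G},\leq,\leq)$. This viewpoint is legitimate because Lemma \ref{lema:loc-cat} makes $\mathcal{G}$ a locally inductive category, and Proposition \ref{prop:restriction} makes that category a local biordered Ehresmann category. By Theorem \ref{prop:ESN-objetos}(b), $C(S(\mathcal{G})) = \mathcal{G}$. So $C(S(\mathcal{G}))$ is a locally inductive groupoid, and Theorem \ref{prop:ESN-inverse} then shows that $S(\mathcal{G})$ is an inverse semigroupoid.

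The only delicate point is what counts as an object of $iSGPD_m$. An inverse semigroupoid is an object of $ESGPD_m$ only once it is equipped with the operations $s^+ = ss^{-1}$ and $s^\ast = s^{-1}s$. I must therefore confirm that the operations $+$ and $\ast$ produced on $S(\mathcal{G})$ are exactly these. Since the pseudo-inverse in an inverse semigroupoid is unique, the two descriptions should be forced to agree. Concretely, in the proof of Lemma \ref{lema:inverse-1} we have $x^+ = R(x) = x \circ x^{-1} = x \otimes x^{-1}$, and the groupoid inverse is the unique pseudo-inverse. Dually, $x^\ast = D(x) = x^{-1} \otimes x$. Once this is checked, the restricted functors are mutually inverse, which proves that $iSGPD_m$ and $liGPD_{if}$ are isomorphic.
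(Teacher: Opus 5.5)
Your proposal is correct and follows the paper's route. The paper obtains this corollary by restricting the isomorphism of Theorem \ref{teo:ESN-morphism} to the full subcategories, using Theorem \ref{prop:ESN-inverse} to match objects. Your extra check that the operations on $S(\mathcal{G})$ are $x \otimes x^{-1}$ and $x^{-1} \otimes x$ is a detail the paper leaves implicit. That check needs only three facts: $x \otimes x^{-1} \otimes x = x$, $x^{-1} \otimes x \otimes x^{-1} = x^{-1}$, and uniqueness of pseudo-inverses.
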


    Note that any composition-preserving function between inverse semigroupoids is a (2,1,1)-morphism. Indeed, since pseudo-inverses in inverse semigroupoids are unique, any composition-preserving function necessarily preserves pseudo-inverses. Moreover, since in this setting one has $s^{+} = s^{-1}s$ and $s^{\ast} = ss^{-1}$, such functions also preserve the restriction operations. Thus, for inverse semigroupoids, composition-preserving functions are precisely the $(2,1,1)$-morphisms. In \cite{dewolf2018ehresmann} such functions are called \emph{semifunctors} and are the morphisms of the category of inverse semigroupoids (also called \emph{inverse semicategories}). Therefore, Corollary \ref{coro:ESN-morphism-i} coincides with \cite[Corollary 3.18]{dewolf2018ehresmann}. In particular, our result also cover the ``morphism part'' of the classical ESN Theorem for inverse semigroups \cite[Theorem 4.1.8]{lawson1998inverse}.

\subsection{\texorpdfstring{$\vee$}{}-Premorphisms and Ordered Functors} The notion of $\vee$-premorphism arose in the characterization of $E$-unitary covers for inverse semigroups. In particular, McAlister and Reilly showed that subdirect products of an inverse semigroup $S$ by a group $G$ can be characterized in terms of such functions \cite[Theorem 3.9]{mcalister1977}. On the other hand, the ESN Theorem for inverse semigroups \cite[Theorem 4.1.8]{lawson1998inverse} establishes that $\vee$-premorphisms between inverse semigroups correspond precisely to order-preserving functors between inductive groupoids. In this subsection, we extend the ``$\vee$-premorphism'' part of the ESN Theorem to the setting of restriction semigroupoids.\\

   Throughout what follows, $(S,+,\ast)$ and $(T,+,\ast)$ denote restriction semigroupoids with sets of projections $U$ and $V$, respectively, and $(\mathcal{C},\leq)$ and $(\mathcal{D},\leq)$ denote locally inductive categories.

    \begin{defi}
        A function $\varphi \colon S \to T$ between restriction semigroupoids is called a \emph{$\vee$-premorphism} if the following conditions are satisfied:
        \begin{enumerate} \Not{$\vee$}
            \item if $st$ is defined, then $\varphi(s)\varphi(t)$ is defined and $\varphi(st) \leq \varphi(s)\varphi(t)$; \label{vm1}
            \item $\varphi(s^+) = \varphi(s)^+$ and $\varphi(s^\ast) = \varphi(s)^\ast$, for every $s \in S$. \label{vm2}
        \end{enumerate}
    \end{defi}

    In \cite[Definition 4.2]{hollings2010extending}, a ``$\vee$-premorphism'' is defined as a function satisfying \eqref{vm1} together with the conditions $\varphi(s^+) \leq \varphi(s)^+$ and $\varphi(s^\ast) \leq \varphi(s)^\ast$, for every $s \in S$, which are more general than \eqref{vm2}. However, \cite[Lemma 4.5]{hollings2010extending} shows that such functions coincide precisely with the $\vee$-premorphisms we consider here. Accordingly, we adopt the formulation of $\vee$-premorphisms using equality in \eqref{vm2}.

    \begin{lemma} \label{lema:vpremorphism-order}
        Every $\vee$-premorphism is order-preserving.

        \begin{proof}
            Let $\varphi \colon S \to T$ be a $\vee$-premorphism and suppose that $s \leq t$. Then $s^+t$ is defined and $s^+t = s$. From \eqref{vm1} we obtain that $\varphi(s^+)\varphi(t)$ is defined and $\varphi(s) = \varphi(s^+t) \leq \varphi(s^+)\varphi(t)$. Hence,
                $$ \varphi(s) = \varphi(s)^+\varphi(s^+)\varphi(t) \overset{\eqref{vm2}}{=} \varphi(s)^+\varphi(s)^+\varphi(t) \overset{\ref{lema:le}(a)}{=} \varphi(s)^+\varphi(t). $$
            This proves that $\varphi(s) \leq \varphi(t)$.
        \end{proof}
    \end{lemma}

    \begin{lemma} \label{lemma:composition-vpremorphisms}
        The composition of $\vee$-premorphisms is a $\vee$-premorphism.
        
        \begin{proof}
            Let $\varphi \colon S \to T$ and $\psi \colon T \to R$ be $\vee$-premorphisms. Suppose that $st$ is defined. From \eqref{vm1} for $\varphi$, we obtain that $\varphi(s)\varphi(t)$ is defined and $\varphi(st) \leq \varphi(s)\varphi(t)$. Therefore, from \eqref{vm1} for $\psi$, we obtain that $\psi(\varphi(s))\psi(\varphi(t))$ is defined and $\psi(\varphi(s)\varphi(t)) \leq \psi(\varphi(s))\psi(\varphi(t))$. Hence,
                $$ \psi(\varphi(st)) \overset{\eqref{lema:vpremorphism-order}}{\leq} \psi(\varphi(s)\varphi(t)) \leq \psi(\varphi(s))\psi(\varphi(t)). $$
            That is, $\psi \circ \varphi$ satisfies \eqref{vm1}. Since $\varphi$ and $\psi$ satisfy \eqref{vm2}, for every $s \in S$, we obtain
                $$ \psi(\varphi(s^+)) = \psi(\varphi(s)^+) = \psi(\varphi(s))^+ \quad\text{and}\quad \psi(\varphi(s^\ast)) = \psi(\varphi(s)^\ast) = \psi(\varphi(s))^\ast. $$
            Therefore, $\psi \circ \varphi$ satisfies \eqref{vm2}. This proves that $\psi \circ \varphi$ is a $\vee$-premorphism.
        \end{proof}
    \end{lemma}

    From Lemma \ref{lemma:composition-vpremorphisms}, we obtain that restriction semigroupoids and $\vee$-premorphisms form a category, which we denote by $rSGPD_{\vee}$. We denote by $iSGPD_{\vee}$ the full subcategory of $rSGPD_{\vee}$ whose objects are inverse semigroupoids.

    \begin{defi}
        A function $\varphi \colon \mathcal{C} \to \mathcal{D}$ between locally inductive categories is called an \emph{ordered functor} if the following conditions are satisfied:
        \begin{enumerate} \Not{of}
            \item $\varphi(D(x)) = D(\varphi(x))$ and $\varphi(R(x)) = R(\varphi(x))$, for every $x \in \mathcal{C}$; \label{of1}
            \item if $D(x) = R(y)$, then $\varphi(x) \circ \varphi(y) = \varphi(x \circ y)$; \label{of2}
            \item if $x \leq y$, then $\varphi(x) \leq \varphi(y)$. \label{of3}
        \end{enumerate}
    \end{defi}

    \begin{lemma} \label{lema:ESN-vpremorphism}
        Let $\varphi \colon S \to T$ be a function. Regard $\varphi \colon C(S) \to C(T)$ as the same function. Then $\varphi \colon S \to T$ is a $\vee$-premorphisms if and only if $\varphi \colon C(S) \to C(T)$ is an ordered functor.

        \begin{proof}
            Suppose that $\varphi \colon S \to T$ is a $\vee$-premorphism. We prove that $\varphi$ is an ordered functor. Note that \eqref{of1} is precisely \eqref{vm2}, and \eqref{of3} is Lemma \ref{lema:vpremorphism-order}. Thus, it remains to prove that \eqref{of2} is satisfied. In fact, suppose that $s^\ast = D(s) = R(t) = t^+$. Then
            \begin{align*}
                \varphi(st) &= \varphi(st)^+ \varphi(s)\varphi(t) & \eqref{vm1} \\
                &= \varphi((st)^+) \varphi(s)\varphi(t) & \eqref{vm2} \\
                &= \varphi((st^+)^+) \varphi(s)\varphi(t) & \eqref{le4} \\
                &= \varphi(ss^\ast)^+ \varphi(s)\varphi(t) \\
                &= \varphi(s)^+ \varphi(s)\varphi(t) & \eqref{rr1} \\
                &= \varphi(s)\varphi(t). & \eqref{lr1}
            \end{align*}
            This proves that $\varphi(s \circ t) = \varphi(st) = \varphi(s)\varphi(t) = \varphi(s) \circ \varphi(t)$. Therefore, $\varphi \colon C(S) \to C(T)$ is an ordered functor. Conversely, suppose that $\varphi \colon C(S) \to C(T)$ is an ordered functor. We prove that $\varphi$ is a $\vee$-premorphism. Condition \eqref{vm2} is precisely \eqref{of1}. Thus, it remains to prove that \eqref{vm1} is satisfied.
            
            Suppose that $st$ is defined and let $e = s^\ast t^+ = D(s) \wedge R(t)$. Since $e \leq D(s)$, we have
                $$ \varphi(e) \overset{\eqref{of3}}{\leq} \varphi(D(s)) \overset{\eqref{of1}}{=} D(\varphi(s)). $$
            And analogously $\varphi(e) \leq R(\varphi(t))$. Since $\mathcal{D}_0$ is a local meet-semilattice, we have that $\varphi(e) \leq D(\varphi(s)) \wedge R(\varphi(t))$. Let $f = D(\varphi(s)) \wedge R(\varphi(t))$. Since $\varphi(e) \leq f \leq D(\varphi(s))$ and $\varphi(e) \leq f \leq R(\varphi(t))$, it follows from Lemma \ref{lema:restricao} that
                $$ \varphi(s)|\varphi(e) \leq \varphi(s)|f \quad\text{and}\quad \varphi(e)|\varphi(t) \leq f|\varphi(t). $$
            Since both $(\varphi(s)|\varphi(e)) \circ (\varphi(e)|\varphi(t))$ and $(\varphi(s)|f) \circ (f|\varphi(t))$ are defined, it follows that
                $$ (\varphi(s)|\varphi(e)) \circ (\varphi(e)|\varphi(t)) \overset{\eqref{O2}}{\leq} (\varphi(s)|f) \circ (f|\varphi(t)). $$
            On the other hand, since $s|e \leq s$, we have $\varphi(s|e) \leq \varphi(s)$ by \eqref{of3}, and $D(\varphi(s|e)) = \varphi(D(s|e)) = \varphi(D(e)) = \varphi(e)$ by \eqref{of1}. From the uniqueness of the restriction, we obtain $\varphi(s|e) = \varphi(s)|\varphi(e)$. Analogously, we obtain $\varphi(e|t) = \varphi(e)|\varphi(t)$. Therefore,
            \begin{align*}
                \varphi(s \otimes t) &= \varphi((s|e) \circ (e|t)) \\
                &= \varphi(s|e) \circ \varphi(e|t) & \eqref{of2} \\
                &= (\varphi(s)|\varphi(e)) \circ (\varphi(e)|\varphi(t)) & \eqref{restriction}, \eqref{corestriction} \\
                &\leq (\varphi(s)|f) \circ (f|\varphi(t)) & \eqref{O2} \\
                &= \varphi(s) \otimes \varphi(t).
            \end{align*}
            This proves that $\varphi(st) = \varphi(s)\varphi(t)$. Therefore, $\varphi \colon S \to T$ is a $\vee$-premorphism.
        \end{proof}
    \end{lemma}

    Analogously to Lemma \ref{coro:composition-ifunctors}, it follows from Lemma \ref{lema:ESN-vpremorphism} that the composition of ordered functors is again an ordered functor. Hence, locally inductive categories and ordered functors form a category, which we denote by $liCAT_{of}$. Furthermore, we denote by $liGPD_{of}$ the full subcategory of $liCAT_{of}$ whose objects are locally inductive groupoids.

    \begin{theorem} \label{teo:ESN-vpremorphism}
        The categories $rSGPD_{\vee}$ and $liCAT_{of}$ are isomorphic.

        \begin{proof}
            The proof is analogous to that of Theorem \ref{teo:ESN-morphism}. The correspondences $C$ and $S$ between restriction semigroupoids and locally inductive categories extend to categories isomorphisms between $rSGPD_{\vee}$ and $liCAT_{of}$.
        \end{proof}
    \end{theorem}

    In the same way as in  Corollary \ref{coro:ESN-morphism-r}, we conclude that Theorem \ref{teo:ESN-vpremorphism} generalizes the ESN-type Theorem for restriction semigroups and $\vee$-premorphisms \cite[Theorem 4.1]{hollings2010extending}. 
    
    The following result is a consequence of Theorem \ref{teo:ESN-vpremorphism} and Lemmas \ref{lema:inverse-1} and \ref{lema:inverse-2}.

    \begin{corollary} \label{coro:ESN-vpremorphism-i}
        The categories $iSGPD_{\vee}$ and $liGPD_{of}$ are isomorphic.
    \end{corollary}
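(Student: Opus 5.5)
The statement is Corollary~\ref{coro:ESN-vpremorphism-i}: the categories $iSGPD_{\vee}$ and $liGPD_{of}$ are isomorphic. My plan is to restrict the isomorphism of Theorem~\ref{teo:ESN-vpremorphism} to full subcategories, as was done for Corollaries~\ref{coro:ESN-morphism-r} and~\ref{coro:ESN-morphism-i}.

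Let $C \colon rSGPD_{\vee} \to liCAT_{of}$ and $S \colon liCAT_{of} \to rSGPD_{\vee}$ be the mutually inverse functors of Theorem~\ref{teo:ESN-vpremorphism}. Both act as the identity on underlying morphisms.

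First, the domain and codomain are categories. By definition, $iSGPD_{\vee}$ is the full subcategory of $rSGPD_{\vee}$ on the inverse semigroupoids; this makes sense because Remark~\ref{obs:inverse}(4) shows every inverse semigroupoid is a restriction semigroupoid. Likewise, $liGPD_{of}$ is the full subcategory of $liCAT_{of}$ on the locally inductive groupoids, which are locally inductive categories by Lemma~\ref{lema:loc-cat}.

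Next, I will check that $C$ maps objects of $iSGPD_{\vee}$ into $liGPD_{of}$. If $S$ is an inverse semigroupoid, then Theorem~\ref{prop:ESN-inverse} (Lemma~\ref{lema:inverse-1} followed by Lemma~\ref{lema:inverse-2}) shows that $C(S)$ is a locally inductive groupoid.

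Conversely, I will check that $S$ maps objects of $liGPD_{of}$ into $iSGPD_{\vee}$. Let $\mathcal{G}$ be a locally inductive groupoid.
\begin{enumerate}
\item By Lemma~\ref{lema:loc-cat}, $\mathcal{G}$ is a locally inductive category.
\item By Proposition~\ref{prop:restriction}, $(\mathcal{G},\leq,\leq)$ is a local biordered Ehresmann category, so $S(\mathcal{G})$ is defined.
\item By Theorem~\ref{prop:ESN-objetos}(b), $C(S(\mathcal{G})) = \mathcal{G}$, so $C(S(\mathcal{G}))$ is both a locally inductive category and a groupoid.
\item By Lemma~\ref{lema:inverse-1}, $S(\mathcal{G})$ is an inverse semigroupoid.
\end{enumerate}

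Since both subcategories are full and the functors are the identity on morphisms, $C$ and $S$ send morphisms between objects of the subcategories to morphisms between objects of the subcategories. So they restrict to functors $iSGPD_{\vee} \to liGPD_{of}$ and back, and these restrictions remain mutually inverse.

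The only point needing care is that the order on $C(S)$ is the one Lemma~\ref{lema:inverse-2} uses. For a restriction semigroupoid, Lemma~\ref{lema:restriction-order} gives $\leq_l = \leq_r$, so there is a single order $\leq$. A locally inductive groupoid is then the pair $(C(S),\leq)$ with this single order, and no real obstacle remains.
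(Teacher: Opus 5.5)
Your proposal is correct and is essentially the paper's argument: the paper also obtains the corollary by restricting the isomorphism of Theorem~\ref{teo:ESN-vpremorphism} to the full subcategories, with Lemmas~\ref{lema:inverse-1} and~\ref{lema:inverse-2} ensuring that $C$ and $S$ match inverse semigroupoids with locally inductive groupoids. One point is left implicit, by you and by the paper alike: $S(\mathcal{G})$ must carry the canonical restriction structure of Remark~\ref{obs:inverse}(4), i.e.\ $R(x)=xx^{-1}$ and $D(x)=x^{-1}x$, and this holds because the groupoid inverse satisfies $x\otimes x^{-1}=R(x)$ and $x^{-1}\otimes x=D(x)$ (as computed in the proof of Lemma~\ref{lema:inverse-1}), so it is a pseudo-inverse of $x$ and hence, by uniqueness, the inverse of $x$ in $S(\mathcal{G})$.
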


    In \cite[Definition 3.20]{dewolf2018ehresmann}, a function $F \colon S \to T$ between inverse categories was called an \emph{oplax functor} if $F(1_A) \leq 1_{F(A)}$ and, whenever $g \circ f$ is defined, $Fg \circ Ff$ is also defined and $F(g \circ f) \leq Fg \circ Ff$. Similar to \cite[Theorem 3.1.5]{lawson1998inverse}, one can prove that oplax functors preserve pseudo-inverses. Hence, the proof of \cite[Lemma 4.4]{hollings2010extending} shows that oplax functors are precisely the $\vee$-premorphisms between inverse categories. On the other hand, it follows from Theorem \ref{prop:ESN-inverse} and Lemma \ref{lema:ESN-cat} that inverse categories correspond to locally complete inductive groupoids. Hence, Corollary \ref{coro:ESN-vpremorphism-i} generalizes the ESN Theorem for inverse categories and oplax functors \cite[Theorem 3.21]{dewolf2018ehresmann}. In particular, we also obtain the ``premorphism part'' of the classical Ehresmann-Schein-Nambooripad Theorem for inverse semigroups \cite[Theorem 4.1.8]{lawson1998inverse}.

    \begin{obs} \label{obs:join}
        (1) Every (2,1,1)-morphism is a $\vee$-premorphism. In fact, condition \eqref{m2} is equivalent to \eqref{vm2}, while $\varphi(st) = \varphi(s)\varphi(t)$ is a particular case of $\varphi(st) \leq \varphi(s)\varphi(t)$. Hence, condition \eqref{m1} implies \eqref{vm1}. Therefore, the category isomorphism $C \colon rSGPD_{\vee} \to liCAT_{of}$ is an extension of the category isomorphism $C \colon rSGPD_m \to liCAT_{if}$.

        (2) At present, it is unclear whether the category isomorphism
        $C \colon ESGPD_m \to lECAT_{if}$ can be extended so as to generalize the isomorphism $C \colon rSGPD_{\vee} \to liCAT_{of}$. This difficulty arises from the use of \eqref{O2} in the proof that, if $\varphi \colon C(S) \to C(T)$ is an ordered functor, then $\varphi \colon S \to T$ is a $\vee$-premorphism. For if $S$ is an Ehresmann semigroupoid, then we can obtain $\varphi(s)|\varphi(e) \leq_r \varphi(s)|f$ and $\varphi(e)|\varphi(t) \leq_l f|\varphi(t)$, but since the partial orders may not coincide, we can not use \eqref{O2} to conclude that $\varphi(st) = \varphi(s)|\varphi(e) \circ \varphi(e)|\varphi(t) \leq \varphi(s)|f \circ f|\varphi(t) = \varphi(s)\varphi(t)$.
    \end{obs}

\subsection{\texorpdfstring{$\wedge$}{}-Premorphisms and Inductive Prefunctors} The notion of $\wedge$-premorphisms was also introduced by McAlister and Reilly, who showed that every $E$-unitary cover of an inverse semigroup can be constructed via such functions \cite[Theorem 4.5]{mcalister1977}. More recently, $\wedge$-premorphisms have appeared in the study of partial actions (see, for instance, \cite[Proposition 2.10]{hollings2007monoids}) and provide a fundamental characterization of the Szendrei expansion of one-sided restriction semigroupoids \cite[Theorem 4.9]{rsgpdexpansion}. In this subsection, we extend the ``$\wedge$-premorphism'' part of the ESN Theorem for restriction semigroups \cite[Theorem 5.9]{hollings2010extending} to restriction semigroupoids.

    \begin{defi}
        A function $\varphi \colon S \to T$ between restriction semigroupoids is called a \emph{$\wedge$-premorphism} if the following conditions are satisfied:
        \begin{enumerate} \Not{$\wedge$}
            \item if $st$ is defined, then $\varphi(s)\varphi(t)$, $\varphi(st)\varphi(t)^\ast$ and $\varphi(s)^+\varphi(st)$ are defined and \label{wm1}
                $$ \varphi(s)\varphi(t) = \varphi(s)^+\varphi(st) = \varphi(st)\varphi(t)^\ast; $$
            \item $\varphi(s)^+ \leq \varphi(s^+)$ and $\varphi(s)^\ast \leq \varphi(s^\ast)$, for all $s \in S$. \label{wm2}
        \end{enumerate}
    \end{defi}

    \begin{lemma} \label{lema:wpremorphisms-order}
        Let $\varphi \colon S \to T$ be a $\wedge$-premorphism.
        \begin{itemize}
            \item[(a)] If $e \in U$, then $\varphi(e) \in V$.
            \item[(b)] If $s \leq t$, then $\varphi(s) \leq \varphi(t)$.
        \end{itemize}

        \begin{proof}
            (a) Let $e \in U$. Then $e = e^+$ by Lemma \ref{lema:le}(a). Therefore,
                $$ \varphi(e) \overset{\eqref{lr1}}{=} \varphi(e)^+ \varphi(e) = \varphi(e)^+ \varphi(e^+) \overset{\eqref{wm2}}{=} \varphi(e)^+ \in V. $$
            (b) First, suppose that $e,f \in U$ are such that $e \leq f$. Then $ef$ is defined and $e = ef$. From \eqref{wm1}, we obtain that $\varphi(e)\varphi(f)$ is defined and
                $$ \varphi(e)\varphi(f) \overset{\eqref{wm1}}{=} \varphi(e)^+ \varphi(ef) = \varphi(e)^+ \varphi(e) \overset{\eqref{lr1}}{=} \varphi(e). $$
            That is, $\varphi(e) \leq \varphi(f)$. Now, suppose that $s \leq t$. Then there is $e \in U$ such that $et$ is defined and $et = s$. From \eqref{wm1}, we obtain that $\varphi(e)\varphi(t)$ is defined and
                $$ \varphi(e)\varphi(t) \overset{\eqref{wm1}}{=} \varphi(e)^+ \varphi(et) = \varphi(e)^+ \varphi(s) \overset{(a)}{=} \varphi(e)\varphi(s). $$
            On the other hand, it follows from Lemma \ref{lema:le}(b) that $s^+ = (et)^+ = (et)^+ e^+ \leq e^+ = e$. Since $s^+,e \in U$, we conclude from the previous argument that $\varphi(s)^+ \leq \varphi(e)$. That is, $\varphi(s)^+ = \varphi(s)^+ \varphi(e)$. Therefore, $\varphi(s)^+ \varphi(t)$ is defined and
                $$ \varphi(s)^+\varphi(t) = \varphi(s)^+ \varphi(e)\varphi(t) = \varphi(s)^+ \varphi(e) \varphi(s) = \varphi(s)^+ \varphi(s) \overset{\eqref{lr1}}{=} \varphi(s). $$
            This proves that $\varphi(s) \leq \varphi(t)$.
        \end{proof}
    \end{lemma}

    \begin{lemma} \label{lemma:composition-wpremorphism}
        The composition of $\wedge$-premorphisms is a $\wedge$-premorphisms.

        \begin{proof}
            Let $\varphi \colon S \to T$ and $\psi \colon T \to R$ be $\wedge$-premorphisms. From \eqref{wm2} for $\varphi$, we obtain that $\varphi(s)^+ \leq \varphi(s^+)$, for all $s \in S$. Hence,
                $$ \psi(\varphi(s))^+ \overset{\eqref{wm2}}{\leq} \psi(\varphi(s)^+) \overset{\ref{lema:wpremorphisms-order}(b)}{\leq} \psi(\varphi(s^+)). $$
            Analogously, we have $\psi(\varphi(s))^\ast \leq \psi(\varphi(s^\ast))$. Therefore, $\psi \circ \varphi$ satisfies \eqref{wm2}. Suppose that $st$ is defined. From \eqref{wm1} for $\varphi$, we obtain that $\varphi(s)\varphi(t)$ is defined and $\varphi(s)\varphi(t) = \varphi(s)^+\varphi(st)$. Thus, from \eqref{wm1} for $\psi$, we have that $\psi(\varphi(s))\psi(\varphi(t))$ is defined and
                $$ \psi(\varphi(s)) \psi(\varphi(t)) = \psi(\varphi(s))^+ \psi(\varphi(s)\varphi(t)) = \psi(\varphi(s))^+ \psi(\varphi(s)^+ \varphi(st)). $$
            On the other hand, let $x,y \in R$ and suppose that $x^+ \leq y$. Then $x^+ = (x^+y)^+ = x^+y^+$ by \eqref{lr3}. That is, $x^+ \leq y^+$. Let $x^+ = \psi(\varphi(s))^+$ and $y = \psi(\varphi(s)^+)$. We have $x^+ \leq y$ by \eqref{wm2} for $\varphi$. Thus,
            \begin{align*}
                \psi(\varphi(s)) \psi(\varphi(t)) &= \psi(\varphi(s))^+ \psi(\varphi(s)^+ \varphi(st)) \\
                &= \psi(\varphi(s))^+ \psi(\varphi(s)^+)^+ \psi(\varphi(s)^+ \varphi(st)) & (x^+ \leq y^+) \\
                &= \psi(\varphi(s))^+ \psi(\varphi(s)^+) \psi(\varphi(st)) & \eqref{wm1} \\
                &= \psi(\varphi(s))^+ \psi(\varphi(st)). & (x^+ \leq y)
            \end{align*}
            Analogously, we have $\psi(\varphi(s))\psi(\varphi(t)) = \psi(\varphi(st))\psi(\varphi(t))^\ast$. Hence, $\psi \circ \varphi$ satisfies \eqref{wm1}.
        \end{proof}
    \end{lemma}

    From Lemma \ref{lemma:composition-wpremorphism}, we obtain that restriction semigroupoids and $\wedge$-premorphisms form a category, which we denote by $rSGPD_{\wedge}$. Furthermore, we denote by $iSGPD_{\wedge}$ the full subcategory of $rSGPD_{\wedge}$ whose objects are inverse semigroupoids. Note that $rSGPD_m$ is a subcategory of the category $rSGPD_{\wedge}$. In fact, if $\varphi$ is a (2,1,1)-morphism between restriction semigroupoids, then
        $$ \varphi(s)^+ \varphi(st) \overset{\eqref{m1}}{=} \varphi(s)^+ \varphi(s)\varphi(t) \overset{\eqref{lr1}}{=} \varphi(s)\varphi(t), $$
    and analogously $\varphi(st) \varphi(t)^\ast = \varphi(s)\varphi(t)$. Hence, \eqref{wm1} holds, and since \eqref{m2} is stronger than \eqref{wm2}, every $(2,1,1)$-morphism is a $\wedge$-premorphism.

    \begin{defi}
        A function $\varphi \colon \mathcal{C} \to \mathcal{D}$ between locally inductive categories is called an \emph{inductive prefunctor} if the following conditions are satisfied:
        \begin{enumerate} \Not{ip}
            \item if $x \circ y$ is defined, then $\varphi(x) \otimes \varphi(y) \leq \varphi(x \circ y)$; \label{ip1}
            \item $D(\varphi(x)) \leq \varphi(D(x))$ and $R(\varphi(x)) \leq \varphi(R(x))$, for every $x \in \mathcal{C}$; \label{ip2}
            \item if $x \leq y$, then $\varphi(x) \leq \varphi(y)$; \label{ip3}
            \item if $e,f \in \mathcal{C}_0$ are such that $x \otimes e$ and $f \otimes x$ are defined, then \label{ip4}
                $$ \varphi(x) \otimes \varphi(e) \leq \varphi(x \otimes e) \quad\text{and}\quad \varphi(f) \otimes \varphi(x) \leq \varphi(f \otimes x); $$
            \item if $x \otimes y$ is defined, then \label{ip5}
                $$ R(\varphi(x) \otimes \varphi(y)) = R(\varphi(x)) \wedge R(\varphi(x \otimes y)), $$
            and
                $$ D(\varphi(x) \otimes \varphi(y)) = D(\varphi(x \otimes y)) \wedge D(\varphi(y)). $$
        \end{enumerate}
    \end{defi}

    \begin{obs} \label{obs:ip}
        The axioms \eqref{ip1}, \eqref{ip4} and \eqref{ip5} are well defined. First, note that condition \eqref{ip2} implies that $\varphi(\mathcal{C}_0) \subseteq \mathcal{D}_0$. Indeed, if $e \in \mathcal{C}_0$, then $e = R(e)$. Hence, by \eqref{ip2}, we have $R(\varphi(e)) \leq \varphi(R(e)) = \varphi(e)$. Therefore,
            $$ R(\varphi(e)) = R(\varphi(e))^+ \otimes \varphi(e) = \varphi(e)^+ \otimes \varphi(e) \overset{\eqref{lr1}}{=} \varphi(e). $$
        This shows that $\varphi(e) = R(\varphi(e)) \in \mathcal{D}_0$. 
        
        We now show that if $x \otimes y$ is defined, then $\varphi(x) \otimes \varphi(y)$ is defined. Since $x \circ y = x \otimes y$ whenever $x \circ y$ is defined, this proves that \eqref{ip1} and \eqref{ip4} are well defined. Note that if $e,f,g \in \mathcal{D}_0$ are such that $e \wedge f$ and $g \wedge f$ are defined, then $e \wedge g$ is defined, since $\mathcal{D}_0$ is a local meet-semilattice.
        
        Assume that $x \otimes y$ is defined. Then $e = D(x) \wedge R(y)$ is defined. Since $e \leq D(x)$, condition \eqref{ip3} yields $\varphi(e) \leq \varphi(D(x))$, and hence $\varphi(e) \wedge \varphi(D(x))$ is defined. Moreover, by \eqref{ip2}, we have $D(\varphi(x)) \leq \varphi(D(x))$, and therefore $D(\varphi(x)) \wedge \varphi(D(x))$ is defined. Then $\varphi(e) \wedge D(\varphi(x))$ is defined. Analogously, one obtains that $\varphi(e) \wedge R(\varphi(y))$ is defined. Consequently, $D(\varphi(x)) \wedge R(\varphi(y))$ is defined, and hence $\varphi(x) \otimes \varphi(y)$ is defined.

        Finally, to show that \eqref{ip5} is well defined, assume again that $x \otimes y$ is defined. Then $e = D(x) \wedge R(y)$ is defined. On the one hand, we have
            $$ D(\varphi(x \otimes y)) \overset{\eqref{ip2}}{\leq} \varphi(D(x \otimes y)) = \varphi(D((x|e) \circ (e|y))) = \varphi(D(e|y)). $$
        Since $e|y \leq y$, we obtain $D(e|y) \leq D(y)$. Hence, $D(\varphi(x \otimes y)) \leq \varphi(D(e|y)) \leq \varphi(D(y))$ by \eqref{ip3}. On the other hand, we have $D(\varphi(x)) \leq \varphi(D(x))$ by \eqref{ip2}. That is,
            $$ D(\varphi(x \otimes y)) \wedge \varphi(D(y)) \quad\text{and}\quad D(\varphi(y)) \wedge \varphi(D(y)) $$
        are defined. Thus, $D(\varphi(x \otimes y)) \wedge D(\varphi(x))$ is defined. By a symmetric argument, $R(\varphi(x)) \wedge R(\varphi(x \otimes y))$ is defined. This shows that \eqref{ip5} is well defined.
    \end{obs}

    \begin{lemma} \label{lema:ESN-wpremorphism}
        Let $\varphi \colon S \to T$ be a function. Regard $\varphi \colon C(S) \to C(T)$ as the same function. Then $\varphi \colon S \to T$ is a $\wedge$-premorphism if and only if $\varphi \colon C(S) \to C(T)$ is an inductive prefunctor.

        \begin{proof}
            Suppose that $\varphi \colon S \to T$ is a $\wedge$-premorphism. We prove that $\varphi$ is an inductive prefunctor. Notice that \eqref{ip2} is precisely \eqref{wm2}, and \eqref{ip3} is Lemma \ref{lema:wpremorphisms-order}(b). Suppose that $s \otimes t = st$ is defined. From \eqref{wm1}, we obtain that
                $$ \varphi(s) \otimes \varphi(t) = \varphi(s)\varphi(t) = \varphi(s)^+ \varphi(s \otimes t) \leq \varphi(s \otimes t). $$
            In particular, taking $s \in \mathcal{C}_0$ or $t \in \mathcal{C}_0$, we conclude \eqref{ip4}. If $s \circ t$ is defined, then $s \otimes t$ is defined and $s \otimes t = s \circ t$. Therefore,
                $$ \varphi(s) \otimes \varphi(t) \leq \varphi(s \otimes t) = \varphi(s \circ t). $$
            That is, \eqref{ip1} is satisfied. Lastly, if $s \otimes t$ is defined, then
            \begin{align*}
                D(\varphi(s) \otimes \varphi(t)) &= (\varphi(s)\varphi(t))^\ast \\
                &= (\varphi(st) \varphi(t)^\ast)^\ast & \eqref{wm1} \\
                &= \varphi(st)^\ast \varphi(t)^\ast & \eqref{rr3} \\
                &= D(\varphi(s \otimes t)) \wedge D(\varphi(t)).
            \end{align*}
            Analogously, we have that $R(\varphi(s) \otimes \varphi(t)) = R(\varphi(s)) \wedge R(\varphi(s \otimes t))$. This proves that $\varphi \colon C(S) \to C(T)$ is an inductive prefunctor. Conversely, suppose that $\varphi$ is an inductive prefunctor. We prove that $\varphi$ is a $\wedge$-premorphism. Condition \eqref{wm2} is precisely \eqref{ip2}. It remains to prove \eqref{wm1}.
            
            Suppose that $st = s \otimes t$ is defined. From Remark \ref{obs:ip}, we obtain that $\varphi(s) \otimes \varphi(t)$ is defined. Let $e = D(s) \wedge R(t)$ and $f = D(\varphi(s)) \wedge R(\varphi(t))$. From \eqref{ip2} and the fact that $C(T)_0$ is a local meet-semilattice, we have that
                $$ f \leq D(\varphi(s)) \wedge \varphi(R(t)) \leq D(\varphi(s)) \quad \text{and}\quad f \leq \varphi(D(s)) \wedge R(\varphi(t)) \leq R(\varphi(t)). $$
            Therefore, it follows from Lemma \ref{lema:restricao} that 
                $$ \varphi(s)|f \leq \varphi(s)|( D(\varphi(s)) \wedge \varphi(R(t)) ) = \varphi(s) \otimes \varphi(R(t)), $$
            and
                $$ f|\varphi(t) \leq (\varphi(D(s)) \wedge R(\varphi(t)))|\varphi(t) = \varphi(D(s)) \otimes \varphi(t). $$
            On the other hand, let $x,x',y,y' \in T$ be such that $xy$ is defined, $x \leq x'$ and $y \leq y'$. Since $T$ is a restriction semigroupoid, we have that $x'y'$ is defined and $xy \leq x'y'$. In fact,
                $$ xy = (x^+x')(y^+y') = x^+(x'y^+)y' \overset{\eqref{lr4}}{=} x^+((x'y)^+ x')y' = (x^+(x'y)^+) (x'y') \leq x'y'. $$
            From the previous observation, we obtain
            \begin{align*}
                \varphi(s) \otimes \varphi(t) &= (\varphi(s)|f) \otimes (f|\varphi(t)) \\
                &\leq (\varphi(s) \otimes \varphi(R(t))) \otimes (\varphi(D(s)) \otimes \varphi(t)) \\
                &\leq \varphi(s \otimes R(t)) \otimes \varphi(D(s) \otimes t) & \eqref{ip4} \\
                &= \varphi(s|e) \otimes \varphi(e|t) \\
                &\leq \varphi((s|e) \circ (e|t)) & \eqref{ip1} \\
                &= \varphi(s \otimes t).
            \end{align*}
            Hence,
            \begin{align*}
                \varphi(s) \otimes \varphi(t) &= R(\varphi(s) \otimes \varphi(t)) \otimes \varphi(s \otimes t) \\
                &= (R(\varphi(s)) \wedge R(\varphi(s \otimes t))) \otimes \varphi(s \otimes t) & \eqref{ip5} \\
                &= \varphi(s)^+ \otimes \varphi(s \otimes t)^+ \otimes \varphi(s \otimes t) \\
                &= \varphi(s)^+ \otimes \varphi(s \otimes t). & \eqref{lr1}
            \end{align*}
            This proves that, if $st$ is defined, then $\varphi(s)\varphi(t) = \varphi(s)^+ \varphi(st)$. Analogously, $\varphi(s)\varphi(t) = \varphi(st)\varphi(t)^\ast$. Therefore, $\varphi \colon S \to T$ is a $\wedge$-premorphism.
        \end{proof}
    \end{lemma}

    Analogously to Lemma \ref{coro:composition-ifunctors}, it follows from Lemma \ref{lema:ESN-wpremorphism} that the composition of inductive prefunctors is an inductive prefunctor. Hence, locally inductive categories and $\wedge$-premorphisms form a category, which we denote by $liCAT_{ip}$. Furthermore, we denote by $liGPD_{ip}$ the full subcategory of $liCAT_{ip}$ whose objects are locally inductive groupoids.

    \begin{theorem} \label{teo:ESN-wpremorphism}
        The categories $rSGPD_{\wedge}$ and $liCAT_{ip}$ are isomorphic.

        \begin{proof}
            The proof is analogous to that of Theorem \ref{teo:ESN-morphism}. The correspondences $C$ and $S$ between restriction semigroupoids and locally inductive categories extend to categories isomorphisms between $rSGPD_{\wedge}$ and $liCAT_{ip}$.
        \end{proof}
    \end{theorem}

    From Theorem \ref{prop:ESN-objetos-restricao} and Lemma \ref{lema:ESN-semigrupo}, restriction semigroups correspond to inductive groupoids. Hence, Theorem \ref{teo:ESN-wpremorphism} generalizes the ESN Theorem for restriction semigroups and $\wedge$-premorphisms \cite[Theorem 5.9]{hollings2010extending}. Moreover, it follows from Theorem \ref{prop:ESN-inverse} that inverse semigroupoids correspond to locally inductive groupoids. Thus, the following result is an immediate consequence of Theorem \ref{teo:ESN-wpremorphism}.

    \begin{corollary} \label{coro:ESN-wpremorphism-i}
        The categories $iSGPD_{\wedge}$ and $liGPD_{ip}$ are isomorphic.
    \end{corollary}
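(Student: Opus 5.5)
The plan is to obtain Corollary \ref{coro:ESN-wpremorphism-i} by restricting the isomorphism of Theorem \ref{teo:ESN-wpremorphism} to full subcategories, exactly as Corollaries \ref{coro:ESN-morphism-i} and \ref{coro:ESN-vpremorphism-i} were obtained from their parent theorems. Let $C \colon rSGPD_{\wedge} \to liCAT_{ip}$ and $S \colon liCAT_{ip} \to rSGPD_{\wedge}$ be the mutually inverse functors of Theorem \ref{teo:ESN-wpremorphism}. Both act as the identity on underlying maps and send objects via $S \mapsto C(S)$ and $\mathcal{C} \mapsto S(\mathcal{C})$. Since $iSGPD_{\wedge}$ and $liGPD_{ip}$ are by definition full subcategories of $rSGPD_{\wedge}$ and $liCAT_{ip}$, it suffices to show that $C$ maps objects of $iSGPD_{\wedge}$ onto objects of $liGPD_{ip}$, and that $S$ maps objects of $liGPD_{ip}$ into $iSGPD_{\wedge}$. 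Fullness then handles the morphisms automatically: a $\wedge$-premorphism between inverse semigroupoids is sent to an inductive prefunctor between the corresponding locally inductive groupoids, and conversely.

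For the object part, first let $S$ be an inverse semigroupoid, viewed as a restriction semigroupoid through Remark \ref{obs:inverse}(4). Theorem \ref{prop:ESN-inverse} gives that $C(S)$ is a locally inductive groupoid. Conversely, let $(\mathcal{G},\leq)$ be a locally inductive groupoid. By Lemma \ref{lema:loc-cat} it is a locally inductive category, so it is an object of $liCAT_{ip}$ and $S(\mathcal{G})$ is a restriction semigroupoid. By Theorem \ref{prop:ESN-objetos} we have $C(S(\mathcal{G})) = \mathcal{G}$, which is a locally inductive groupoid. Applying Theorem \ref{prop:ESN-inverse} to $S(\mathcal{G})$ then shows that $S(\mathcal{G})$ is an inverse semigroupoid.

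Finally, the restricted functors remain mutually inverse, because $C\circ S$ and $S \circ C$ are identities on the ambient categories. The main point needing care is that the inverse semigroupoid structure obtained in the last step is the same structure that $S(\mathcal{G})$ carries as a restriction semigroupoid. This holds because Theorem \ref{prop:ESN-inverse} is stated for the Ehresmann structure given by $+,\ast$, and in an inverse semigroupoid $s^+ = ss^{-1}$ and $s^\ast = s^{-1}s$ are forced by uniqueness of pseudo-inverses. The objects therefore match on the nose, and no separate compatibility argument is required.
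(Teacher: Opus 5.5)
Your proposal is correct and follows the same route as the paper. The paper also obtains the corollary by restricting the isomorphism of Theorem \ref{teo:ESN-wpremorphism} to the full subcategories $iSGPD_{\wedge}$ and $liGPD_{ip}$, with Theorem \ref{prop:ESN-inverse} giving the object correspondence; you spell this out more carefully via Lemma \ref{lema:loc-cat} and Theorem \ref{prop:ESN-objetos}.

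One correction to your final paragraph. Uniqueness of pseudo-inverses alone does not force $s^+ = ss^{-1}$: the semilattice monoid $\{0,1\}$ with $s^+ = s^\ast = 1$ is a restriction semigroup that is inverse as a semigroup, yet $0^+ = 1 \neq 0 = 0\cdot 0^{-1}$. What forces it for $S(\mathcal{G})$ is the groupoid structure. The groupoid inverse of $s$ in $C(S(\mathcal{G})) = \mathcal{G}$ is a pseudo-inverse of $s$ in $S(\mathcal{G})$, so by uniqueness it equals $s^{-1}$. This gives $ss^{-1} = R(s) = s^+$ and $s^{-1}s = D(s) = s^\ast$.
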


     Corollary \ref{coro:ESN-wpremorphism-i} generalizes the ESN Theorem for inverse semigroups and $\wedge$-premorphisms \cite[Theorem 6.1]{hollings2010extending}. 
     
     From Theorems \ref{teo:ESN-morphism}, \ref{teo:ESN-vpremorphism} and \ref{teo:ESN-wpremorphism}, we obtain the complete version of the ESN Theorem for two-sided restriction semigroupoids, as stated below.

    \begin{theorem}\label{theo:esnprincipal}
        \begin{itemize}
            \item[(a)] The category of restriction semigroupoids and (2,1,1)-morphisms is isomorphic to the category of locally inductive categories and inductive functors.

            \item[(b)] The category of restriction semigroupoids and $\vee$-premorphisms is isomorphic to the category of locally inductive categories and ordered functors.

            \item[(c)] The category of restriction semigroupoids and $\wedge$-premorphisms is isomorphic to the category of locally inductive categories and inductive prefunctors.
        \end{itemize}
    \end{theorem}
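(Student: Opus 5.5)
This statement is a summary theorem, so I will assemble it from results already in place rather than prove anything new. In each of the three parts the two functors are the same object assignments $S \mapsto C(S)$ and $\mathcal{C} \mapsto S(\mathcal{C})$, both acting as the identity on underlying functions. First I check that these assignments restrict correctly to the relevant objects. By Theorem \ref{prop:ESN-objetos-restricao}, $S$ is a restriction semigroupoid if and only if $C(S)$ is a locally inductive category. Conversely, if $(\mathcal{C},\leq)$ is locally inductive, then by Proposition \ref{prop:restriction} the triple $(\mathcal{C},\leq,\leq)$ is a local biordered Ehresmann category. Since $C(S(\mathcal{C})) = \mathcal{C}$ by Theorem \ref{prop:ESN-objetos}(b), Theorem \ref{prop:ESN-objetos-restricao} applied to $S(\mathcal{C})$ shows that $S(\mathcal{C})$ is a restriction semigroupoid. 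Thus $C$ and $S$ are mutually inverse bijections between restriction semigroupoids and locally inductive categories, and this bijection on objects is shared by all three parts.

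For part (a) I simply cite Corollary \ref{coro:ESN-morphism-r}. That corollary comes from restricting the isomorphism of Theorem \ref{teo:ESN-morphism} to the full subcategories $rSGPD_m$ and $liCAT_{if}$. Restriction is legitimate because both subcategories are full and the object bijection above maps one object class exactly onto the other.

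For part (b) the morphism-level input is Lemma \ref{lema:ESN-vpremorphism}: a function $\varphi\colon S\to T$ is a $\vee$-premorphism exactly when the same function $C(S)\to C(T)$ is an ordered functor. Lemma \ref{lemma:composition-vpremorphisms}, together with the remark following Lemma \ref{lema:ESN-vpremorphism}, shows that both kinds of maps form categories $rSGPD_\vee$ and $liCAT_{of}$. Since composition and identities are ordinary function composition and identity maps on both sides, $C$ and $S$ are functors. By Theorem \ref{prop:ESN-objetos} they are mutually inverse on objects, and on morphisms they are identities, so they are mutually inverse. This is exactly Theorem \ref{teo:ESN-vpremorphism}.

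Part (c) is identical in structure, using Lemma \ref{lema:ESN-wpremorphism}, Lemma \ref{lemma:composition-wpremorphism} and Theorem \ref{teo:ESN-wpremorphism}. The only point I expect to need care is that in Lemmas \ref{lema:ESN-vpremorphism} and \ref{lema:ESN-wpremorphism} the order $\leq$ on $C(S)$ must be the intrinsic order of $S$. That holds because $\leq_l = \leq_r$ by Lemma \ref{lema:restriction-order}, and the pseudo-product of $C(S)$ recovers the multiplication of $S$ by Theorem \ref{prop:ESN-objetos}(a). With that checked, the three parts follow directly.
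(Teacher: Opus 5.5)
Your proposal is correct and follows the paper's own argument. The paper states the theorem as an immediate combination of Corollary \ref{coro:ESN-morphism-r} (the restriction of Theorem \ref{teo:ESN-morphism} to full subcategories) with Theorems \ref{teo:ESN-vpremorphism} and \ref{teo:ESN-wpremorphism}. Your explicit checks of the object bijection, via Theorems \ref{prop:ESN-objetos} and \ref{prop:ESN-objetos-restricao} and Proposition \ref{prop:restriction}, and of functoriality, since both functors act as the identity on underlying maps, spell out what the paper covers with ``analogous to Theorem \ref{teo:ESN-morphism}''.
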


    We end this section summarizing the conclusions of Theorem \ref{theo:esnprincipal} together with Corollaries \ref{coro:ESN-morphism-r}, \ref{coro:ESN-morphism-i}, \ref{coro:ESN-vpremorphism-i} and \ref{coro:ESN-wpremorphism-i} in a correspondence of lattices of categories. The arrows $C$ and $S$ in the diagram below represent the constructions $C$ and $S$ on Theorem \ref{prop:ESN-objetos}.

    \begin{center}
        \begin{tikzpicture}
            \tikzstyle{every path}=[draw];

            \node (Espd-m) at (0,0) {$ESGPD_m$};
            \node (Rspd-m) at (0,-2) {$rSGPD_m$};
            \node (Ispd-m) at (0,-4) {$iSGPD_m$};
            \node (Rspd-v) at (2,-1) {$rSGPD_\vee$};
            \node (Ispd-v) at (2,-3) {$iSGPD_\vee$};
            \node (Rspd-w) at (-2,-1) {$rSGPD_\wedge$};
            \node (Ispd-w) at (-2,-3) {$iSGPD_\wedge$};

            \path (Ispd-m) to (Ispd-v); \path (Ispd-m) to (Ispd-w); \path (Ispd-m) to (Rspd-m);
            \path (Rspd-m) to (Rspd-v); \path (Rspd-m) to (Rspd-w); \path (Rspd-m) to (Espd-m);
            \path (Rspd-v) to (Ispd-v); \path (Rspd-w) to (Ispd-w);

            \node (Espd-m) at (0+8,0) {$lECAT_{if}$};
            \node (Rspd-m) at (0+8,-2) {$liCAT_{if}$};
            \node (Ispd-m) at (0+8,-4) {$liGPD_{if}$};
            \node (Rspd-v) at (2+8,-1) {$liCAT_{of}$};
            \node (Ispd-v) at (2+8,-3) {$liGPD_{of}$};
            \node (Rspd-w) at (-2+8,-1) {$liCAT_{ip}$};
            \node (Ispd-w) at (-2+8,-3) {$liGPD_{ip}$};

            \path (Ispd-m) to (Ispd-v); \path (Ispd-m) to (Ispd-w); \path (Ispd-m) to (Rspd-m);
            \path (Rspd-m) to (Rspd-v); \path (Rspd-m) to (Rspd-w); \path (Rspd-m) to (Espd-m);
            \path (Rspd-v) to (Ispd-v); \path (Rspd-w) to (Ispd-w);

            \draw[->] (3,-1.4-0.5) -- (5,-1.4-0.5) node[midway, above]{$C$};
            \draw[<-] (3,-1.6-0.5) -- (5,-1.6-0.5) node[midway, below]{$S$};
        \end{tikzpicture}
    \end{center}

    Due to Proposition \ref{lema:ESN-semigrupo}, the above diagram is still valid if we replace semigroupoids by semigroups, and the corresponding local categories by their non-local version (when $\mathcal{C}_0$ is a meet-semilattice). In the same way, it follows from Proposition \ref{lema:ESN-cat} that the above diagram is still valid if we replace semigroupoids by categories, and the corresponding local categories by their locally complete version (when $\mathcal{C}_0$ is a locally complete meet-semilattice).

\section*{Funding}
\noindent T. Tamusiunas was partially supported by CNPq (Brazil) through a Productivity Research Fellowship, Grant No. 303411/2025-2, and by CNPq (Brazil) under the Universal Call, Grant No. 403606/2025-0.

    \bibliographystyle{abbrvnat}
    \footnotesize{\bibliography{ref}}
\end{document}